\newtheorem{theorem}{Theorem}[section]
\newtheorem{lemma}[theorem]{Lemma}
\newtheorem{proposition}[theorem]{Proposition}
\begin{document}

\title{ The Deformation Spaces of Geodesic Triangulations of Flat Tori}

\author{Yanwen Luo, Tianqi Wu, Xiaoping Zhu}
\address{Department of Mathematics, Rutgers University, New Brunswick
 NJ, 08817}
 \email{yl1594@rutgers.edu}

\address{Center of Mathematical Sciences and Applications, Harvard University, Cambridge, MA, 02138}
 \email{tianqi@cmsa.fas.harvard.edu}

\address{Department of Mathematics, Rutgers University, New Brunswick
 New Jersey 08817}
 \email{xz349@rutgers.edu}

\thanks{Acknowledgement: The author was supported in part by NSF 1737876, NSF 1760471, NSF DMS FRG 1760527 and NSF DMS 1811878.}

\keywords{geodesic triangulations, Tutte's embedding}

\begin{abstract}

We prove that the deformation space of geodesic triangulations of a flat torus is homotopy equivalent to a torus. 
This solves an open problem proposed by Connelly et al. in 1983, in the case of flat tori. A key tool of the proof is a generalization of Tutte's embedding theorem for flat tori. When this paper is under preparation, 
Erickson and Lin proved a similar result, which works for all convex drawings.
\end{abstract}

\maketitle

\section{Introduction}

This paper is a continuation of the previous work \cite{luo2020}, where we proved that the deformation space of geodesic triangulations of a surface with negative curvature is contractible.  The purpose of this paper is to identify the homotopy type of the deformation space of geodesic triangulations of a flat torus. This solves an open question proposed by Connelly et al. in \cite{connelly1983problems}. 
The main result of this paper is 

\begin{theorem}
\label{main}
The deformation space of geodesic triangulations of a flat torus is homotopic equivalent to a torus. 
\end{theorem}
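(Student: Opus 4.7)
\emph{Strategy.} Let $X$ be the deformation space of geodesic triangulations of a flat torus $\mathbb{T}$ with a fixed combinatorial type $\tau$. The torus $\mathbb{T}$ acts on $X$ by post-translation, $\phi \mapsto T_v \circ \phi$. Fix a vertex $v_0$ of $\tau$ and define $\pi : X \to \mathbb{T}$ by $\pi(\phi) := \phi(v_0)$. For any chosen basepoint $\phi_0 \in X$, the map $v \mapsto T_v \circ \phi_0$ is a continuous global section of $\pi$, so the assignment $(\phi, v) \mapsto T_v \circ \phi$ identifies $X$ with the product $X_0 \times \mathbb{T}$, where $X_0 := \pi^{-1}(0)$. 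It therefore suffices to prove that $X_0$ is contractible, whereupon $X \simeq X_0 \times \mathbb{T} \simeq \mathbb{T}$.

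\emph{Contracting $X_0$ via the generalized Tutte theorem.} For this step I would invoke the generalization of Tutte's embedding theorem for flat tori advertised in the abstract. The content of such a theorem should be that, for every positive edge weight $w : E(\tau) \to \mathbb{R}_{>0}$, there is a unique $w$-harmonic geodesic triangulation $\phi_w \in X_0$, characterized by
\[
\sum_{u \sim v} w_{uv}\bigl(\phi_w(u) - \phi_w(v)\bigr) = 0 \quad \text{for every vertex } v,
\]
and that $\phi_w$ depends continuously on $w$. Taking uniform weights $w \equiv 1$ then yields a canonical reference $\phi_{\ast} := \phi_{\mathbf{1}} \in X_0$. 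The plan is to associate to every $\phi \in X_0$ a positive weight $w(\phi)$ realizing $\phi$ as $\phi_{w(\phi)}$, linearly interpolate $w_t := (1-t)\,w(\phi) + t \cdot \mathbf{1}$ (which stays positive on $[0,1]$), and set $\phi_t := \phi_{w_t}$. The generalized Tutte theorem keeps $\phi_t$ inside $X_0$ throughout, so $\{\phi_t\}_{t\in[0,1]}$ gives a deformation retraction of $X_0$ onto $\{\phi_{\ast}\}$.

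\emph{Main obstacle.} The delicate step is constructing the inverse Tutte parameterization $\phi \mapsto w(\phi)$ continuously in $\phi$ and with values in the positive cone. The defining linear system for $w$ is underdetermined, and standard choices such as cotangent weights may become negative on nearly degenerate triangles, obstructing the linear interpolation above. I would attempt to resolve this either by using Floater-type mean-value weights (automatically positive, though recovering $\phi$ exactly still requires an additional short homotopy), or, more robustly, by abandoning weight-space interpolation altogether and instead using the direct configuration-space deformation $\phi_t := (1-t)\phi_{\ast} + t\phi$. The latter would require lifting to the universal cover $\mathbb{R}^2$ and adapting the classical Tutte rigidity argument --- tracking the orientation of each face along the homotopy and ruling out face collapses --- to the equivariant setting of the flat torus, in order to certify that every $\phi_t$ is still a geodesic triangulation.
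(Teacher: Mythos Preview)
Your decomposition $X\cong X_0\times\mathbb T^2$ and the idea of contracting $X_0$ through weight space are exactly the paper's strategy. The gap is in your assumption that the Tutte map $w\mapsto\phi_w$ is defined for \emph{every} positive weight. On a flat torus this fails: the balance system is $A(w)\mathbf x=\mathbf b(w)$ with $A(w)$ of rank $n-1$ (constants are always in the kernel), so it is solvable only when $\mathbf b(w)$ lies in the image of $A(w)$, i.e.\ when $\mathbf b(w)\perp\ker A(w)^T$. This cuts out a proper, \emph{non-convex} subset $W\subsetneq\mathbb R^{\vec E}_+$ of ``admissible'' weights; the paper points this out explicitly (Section~1.2, last paragraph). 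Mean value coordinates do give a continuous positive section $\sigma:X_0\to W$ with $\Psi\circ\sigma=\mathrm{Id}_{X_0}$ (so your parenthetical ``recovering $\phi$ exactly still requires an additional short homotopy'' is unnecessary), but these weights are \emph{asymmetric}, and your linear interpolation $w_t=(1-t)\,\sigma(\phi)+t\cdot\mathbf 1$ will in general leave $W$. At such $t$ there is no balanced map at all, so $\phi_{w_t}$ is undefined and the homotopy breaks.

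The paper's actual technical content is precisely filling this hole: it constructs a continuous retraction $\Phi:\mathbb R^{\vec E}_+\to W$ (Proposition~1.4) via a carefully designed flow on weight space that drives an energy $\mathcal E(w)=\min_{\mathbf x}\|A(w)\mathbf x-\mathbf b(w)\|^2$ to zero in finite time while keeping all weights positive (Section~2). Only after establishing that $W$ is contractible does the pair $(\Psi,\sigma)$ yield the contractibility of $X_0$. Your backup plan of straight-line interpolation $\phi_t=(1-t)\phi_*+t\phi$ in configuration space does not work either: the signed area of each face is quadratic in $t$, positive at the endpoints but with no reason to stay positive in between, and there is no balance condition available to invoke the Tutte-type embedding theorem for the intermediate maps.
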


It is conjectured in \cite{connelly1983problems} that the space of geodesic triangulations of a closed orientable surface $S$ with constant curvature deformation retracts to the group of orientation preserving isometries of $S$ homotopic to the identity.   This paper affirms this conjecture in the case of flat tori. The case of hyperbolic surfaces has been proved in \cite{luo2020}. In a very recent work, Erickson-Lin \cite{erickson2021planar} proved independently a generalized version of our Theorem \ref{main} for general graph drawings on a flat torus.

The study of the homotopy types of spaces of geodesic triangulations stemmed from the work by Cairns \cite{cairns1944isotopic}.  A brief history of this problem can be found in \cite{luo2020}. These spaces are closely related to diffeomoprhism groups of surfaces.
Bloch-Connelly-Henderson \cite{bloch1984space} proved that the space of geodesic triangulations of a convex polygon is contractible. The space of geodesic triangulations of a planar polygon is equivalent to the space of simplexwise linear homeomorphisms. Hence, the Bloch-Connelly-Henderson theorem can be viewed as a discrete analogue of Smale's theorem, which states that the diffeomorphism group of the closed $2$-disk fixing the boundary pointwise is contractible. Earle-Eells \cite{earle1969fibre} proved that the group of  orientation-preserving diffeomorphisms of a torus isotopic to the identity is homotopy equivalent to a torus. Theorem \ref{main} can be regarded as a discrete version of this theorem.

Similar to the previous work \cite{luo2020}, our key idea to prove Theorem \ref{main} originates from Tutte's embedding theorem.

\subsection{Set Up and the Main Theorem}
Let $\mathbb{T}^2=\mathbb R^2/\mathbb Z^2=[0,1]^2/\sim$ be the flat torus constructed by gluing the opposite sides of the unit square in $\mathbb{R}^2$.

A topological triangulation of $\mathbb T^2$ can be identified as a homeomorphism $\psi$ from $|\mathcal T|$ to $\mathbb T^2$, where $|\mathcal T|$ is the carrier of a 2-dimensional simplicial complex $\mathcal T=(V,E,F)$ with the vertex set $V$, and the edge set $E$, and the face set $F$. For convenience, we label the vertices as $v_1,...,v_n$ where $n=|V|$ is the number of the vertices. Denote $|E|$ as the number of the edges, 
and $\vec e_{ij}$ as the directed edge from the vertex $i$ to its neighbor $j$,
and
$\vec E=\{\vec e_{ij}:ij\in E\}$ as the set of directed edges, 
and $N(i)$ as the indices of neighbored vertices of $v_i\in V$. 

A  \textit{geodesic triangulation} with the combinatorial type $(\mathcal{T},\psi)$ is an embedding $\varphi$ from the one-skeleton $\mathcal{T}^{(1)}$ to $\mathbb{T}^2$ satisfying that
\begin{enumerate}
	\item [(a)] the restriction $\varphi_{ij}$ of $\varphi$ on each edge $e_{ij}$, identified with a unit interval $[0,1]$, is a geodesic of constant speed, and 
	\item [(b)] $\varphi$ is homotopic to the restriction of $\psi$ on $\mathcal{T}^{(1)}$.
\end{enumerate}
Let  $X = X(\mathbb{T}^2, \mathcal{T}, \psi)$ denote the set of all such geodesic triangulations, which is so-called a \textit{deformation space of geodesic triangulations of $\mathbb T^2$}. This space can be defined for other flat tori in a similar fashion. Perturbing each vertex locally, we can construct a family of  geodesic triangulations from an initial geodesic triangulation. Therefore, the space $X$ is naturally a $2n$-dimensional manifold.

For any geodesic triangulation $\varphi\in X$, we can always translate $\varphi$ on $\mathbb T^2$ to make the image $\varphi(v_1)$ of the first vertex $v_1$ be at the (quotient of the) origin $(0,0)$. By this normalization, we can decompose $X$ as $X=X_0\times\mathbb T^2$, where
$$
X_0=X_0(\mathbb T^2,\mathcal T,\psi)=\{\varphi\in X:\varphi(v_1)=(0,0)\}.
$$
Since there are affine transformations between any two flat tori, and an affine transformation always preserves the geodesic triangulations, Theorem \ref{main}  reduces to the following.

\begin{theorem}
\label{contractible}
Given a topological triangulation $(\mathcal T,\psi)$ of $\mathbb T^2$, the space ${X_0} = {X_0}(\mathbb{T}^2, \mathcal{T},\psi)$ is contractible. 
\end{theorem}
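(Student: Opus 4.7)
My approach is to extend Tutte's embedding theorem to flat tori and then use it to build an explicit deformation retraction of $X_0$ to a single point. The central object will be a pair of continuous maps between $X_0$ and the cone $W$ of positive weight systems on $\vec E$: a harmonic realization map $w\mapsto\varphi_w$ going from $W$ to $X_0$, and a mean-value-coordinate map $\varphi\mapsto w(\varphi)$ going back, with $\varphi_{w(\varphi)}=\varphi$.

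First I would establish the following Tutte-type statement: for every positive $w\in W$, there is a unique piecewise-linear map $\varphi_w$ on $|\mathcal T|$, with $\varphi_w(v_1)=(0,0)$ and with translation periods determined by $(\mathcal T,\psi)$, satisfying the harmonic balance condition
\begin{equation*}
\sum_{j\in N(i)} w_{ij}\bigl(\tilde\varphi_w(v_j)-\tilde\varphi_w(v_i)\bigr)=0 \qquad \text{for every } i,
\end{equation*}
in the universal cover $\mathbb R^2$; and moreover $\varphi_w\in X_0$, i.e.\ $\varphi_w$ is a genuine geodesic triangulation. Existence and uniqueness of the PL solution reduce to solvability of a linear system with a nondegenerate weighted Laplacian plus period constraints; the real content is the embedding property. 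Next, to each $\varphi\in X_0$ I would associate weights $w(\varphi)$ via Floater's mean-value coordinates,
\begin{equation*}
w_{ij}(\varphi)=\frac{\tan(\alpha_{ij}/2)+\tan(\beta_{ij}/2)}{|\varphi(v_j)-\varphi(v_i)|},
\end{equation*}
where $\alpha_{ij},\beta_{ij}$ are the two angles at $v_i$ in the two triangles of $\varphi$ sharing $e_{ij}$. These are positive, continuous in $\varphi$, and satisfy the balance condition at $\varphi$ by construction.

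With these ingredients in hand, fix the base weight $\mathbf 1=(1,\dots,1)\in W$ and consider
\begin{equation*}
H: X_0\times[0,1]\to X_0, \qquad H(\varphi,t)=\varphi_{(1-t)w(\varphi)+t\mathbf 1}.
\end{equation*}
A convex combination of positive weight systems is positive, so $H(\varphi,t)\in X_0$ for every $t$ by the Tutte theorem above. We have $H(\cdot,0)=\mathrm{id}_{X_0}$ since $\varphi_{w(\varphi)}=\varphi$, while $H(\cdot,1)\equiv\varphi_{\mathbf 1}$ is a single point; continuity of $H$ follows from continuity of $w\mapsto\varphi_w$ (via the implicit function theorem applied to the weighted Laplacian system) and of $\varphi\mapsto w(\varphi)$ (immediate from the explicit formula). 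This is the desired contraction of $X_0$ to a point.

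The hard part is the embedding property in the Tutte-type step. The classical planar Tutte argument relies crucially on a fixed convex outer boundary, which has no analogue on a closed torus; instead one has to work entirely in the universal cover, exploit the $\mathbb Z^2$-equivariance of $\tilde\varphi_w$, and rule out degenerate or flipped triangles by a topological or index-theoretic argument in the spirit of the toroidal Tutte theorem of Gortler--Gotsman--Thurston. All the geometric content of Theorem~\ref{contractible} is concentrated here; once this step is in place, the deformation retraction above makes the rest of the proof essentially formal.
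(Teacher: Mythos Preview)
Your proposal has a genuine gap, and it lies not in the embedding step you flagged as hard but in the existence step you dismissed as routine. You assert that for \emph{every} positive weight $w\in\mathbb R_+^{\vec E}$ the balance system
\[
\sum_{j\in N(i)} w_{ij}\bigl(\tilde\varphi(v_j)-\tilde\varphi(v_i)\bigr)=0,\qquad i=1,\dots,n,
\]
with periods fixed by $(\mathcal T,\psi)$, admits a solution. This is false. In matrix form the system is $A(w)\mathbf x=\mathbf b(w)$, where $A(w)$ has row sums zero and hence rank $n-1$; solvability requires $\mathbf b(w)$ to lie in the column space of $A(w)$, i.e.\ to be orthogonal to the one-dimensional left null space of $A(w)$. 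For symmetric weights that left null vector is $(1,\dots,1)$ and the constraint is automatic by antisymmetry of the period data, but for asymmetric $w$ the left null vector depends nonlinearly on $w$ and the constraint is a genuine codimension-two algebraic condition. The paper calls weights satisfying it \emph{admissible} and denotes their set by $W\subsetneq\mathbb R_+^{\vec E}$; see also the discussion in \cite{chambers2020morph}.

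This breaks your homotopy. The mean-value weights $w(\varphi)$ are asymmetric (the formula uses the angles at $v_i$, not at $v_j$), so while $w(\varphi)$ and $\mathbf 1$ are both admissible, there is no reason the segment $(1-t)w(\varphi)+t\mathbf 1$ stays inside $W$, and for intermediate $t$ the map $\varphi_{(1-t)w(\varphi)+t\mathbf 1}$ need not exist at all. The set $W$ is cut out by polynomial equations and is not convex, so a straight-line interpolation in weight space does not suffice. The paper's proof confronts exactly this obstacle: rather than interpolate, it constructs a continuous retraction $\Phi:\mathbb R_+^{\vec E}\to W$ (Proposition~\ref{retraction}) via a carefully designed flow that drives an ``inadmissibility energy'' $\mathcal E(w)=\min_{\mathbf x}\|A(w)\mathbf x-\mathbf b(w)\|^2$ to zero in finite time while staying in $\mathbb R_+^{\vec E}$. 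Once that retraction is in hand, $W$ is contractible and your mean-value/Tutte maps finish the argument; but the flow is where the work is, and your plan omits it.
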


\subsection{Key Tool: Generalized Tutte's Embedding Theorem}
Let $\varphi$ be a map from $\mathcal{T}^{(1)}$ to $\mathbb{T}^2$. Assume $\varphi$ maps every edge in $E$ to a geodesic arc parametrized by constant speed on $\mathbb{T}^2$. A positive assignment
$w\in \mathbb{R}_{+}^{\vec{E}}$ on the set of directed edges is called a \textit{weight} of $\mathcal{T}$. We say $\varphi$ is \textit{$w$-balanced} at $v_i$ if 
$$\sum_{ j\in N(i)} w_{ij}\dot\varphi_{ij} = 0,$$
where $\dot\varphi_{ij}=\dot\varphi_{ij}(0)\in T_{\varphi(v_i)}\mathbb T^2\cong\mathbb R^2$.
Then $\dot\varphi_{ij}$ indicates the direction of the edge $\varphi(e_{ij})$ and $\|\dot\varphi_{ij}\|$ equals to the length of $\varphi(e_{ij})$.
A map $\varphi$ is called \textit{$w$-balanced} if it is $w$-balanced at each vertex in $V$.  We have the following version of Tutte's embedding theorem, which is a special case of Gortler-Gotsman-Thurston's embedding result in \cite{gortler2006discrete} and
Theorem 1.6 in \cite{luo2020}.

\begin{theorem}
\label{embedding1}
Assume
$(\mathcal T,\psi)$ is a topological triangulation of $\mathbb T^2$,
and $\varphi$ is a map from $\mathcal{T}^{(1)}$ to $\mathbb{T}^2$ such that $\varphi$ is homotopic to  $\psi|_{\mathcal T^{(1)}}$ and
the restriction $\varphi_{ij}$ of $\varphi$ on each edge $e_{ij}$ is a geodesic parametrized by constant speed. If $\varphi$ is $w$-balanced for some weight $w\in\mathbb R^{\vec E}_+$, then $\varphi$ is an embedding, or equivalently $\varphi$ is a geodesic triangulation. 
\end{theorem}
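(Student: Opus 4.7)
The plan is to lift the problem to the universal cover $\mathbb R^2$ of $\mathbb T^2$ and prove an equivariant analogue of Tutte's embedding theorem there, from which the statement on $\mathbb T^2$ follows by descent.

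First I would set up the lift. Let $p:\mathbb R^2\to\mathbb T^2$ denote the covering projection and let $\tilde{\mathcal T}$ be the (infinite, locally finite) simplicial complex obtained by pulling $\mathcal T$ back along $p\circ\psi$; the deck group $\mathbb Z^2$ acts on $\tilde{\mathcal T}$ by simplicial automorphisms. Using homotopy condition (b), the map $\varphi$ admits a unique lift $\tilde\varphi:\tilde{\mathcal T}^{(1)}\to\mathbb R^2$ that is $\mathbb Z^2$-equivariant, i.e.\ $\tilde\varphi(\gamma\cdot\tilde v)=\tilde\varphi(\tilde v)+\gamma$ for $\gamma\in\mathbb Z^2$. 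Because each $\varphi_{ij}$ is a constant-speed geodesic on $\mathbb T^2$, each lifted edge is a straight Euclidean segment, and the $w$-balanced condition transfers directly to $\tilde\varphi$: at every vertex $\tilde v$ of $\tilde{\mathcal T}$, the weighted sum of outgoing edge vectors vanishes, where we pull back the weight $w\in\mathbb R^{\vec E}_+$ along the covering.

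Next I would establish local injectivity at each vertex. For any unit vector $u\in\mathbb R^2$, the scalar function $f_u(\tilde v)=u\cdot\tilde\varphi(\tilde v)$ is discrete-harmonic with respect to $w$: each value is a positive-weighted convex combination of the values at neighbors. In particular, $\tilde\varphi(\tilde v)$ is not a vertex of the convex hull of the images of its neighbors, so the outgoing edge directions $\dot{\tilde\varphi}_{ij}$ cannot lie in any closed half-plane. Combined with the cyclic combinatorial ordering of the link in $\tilde{\mathcal T}$, this forces the total turning of the link around $\tilde\varphi(\tilde v)$ to equal exactly $2\pi$ (not $0$, $-2\pi$, or $\pm 4\pi$), which means the piecewise-linear extension of $\tilde\varphi$ restricted to the star of $\tilde v$ is an orientation-preserving embedding onto a neighborhood of $\tilde\varphi(\tilde v)$.

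I would then conclude global injectivity by a covering-space argument. Extend $\tilde\varphi$ linearly over each triangular face; local injectivity at every vertex together with non-degeneracy of each triangle (which follows from the same balance argument) makes this extension a local homeomorphism $\mathbb R^2\to\mathbb R^2$. Equivariance together with compactness of $\mathbb T^2$ shows this map is proper, hence a covering map, and since $\mathbb R^2$ is simply connected it must be a homeomorphism. Descending by $\mathbb Z^2$-equivariance gives that $\varphi$ itself is an embedding of $|\mathcal T|$ into $\mathbb T^2$, i.e.\ a geodesic triangulation.

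The main obstacle is the middle step: ruling out a ``folded'' or multiply-wrapped star around a vertex. The balance condition alone prevents the edge directions from being confined to a half-plane, but one still has to combine this with the fixed cyclic ordering inherited from $\mathcal T$ to pin down the total turning to exactly $2\pi$. This is where the positivity of $w$ and the two-dimensionality of the target are both essential, and is precisely where the argument would need to follow the discrete one-ring analysis of Gortler--Gotsman--Thurston \cite{gortler2006discrete} or the corresponding lemma from \cite{luo2020}.
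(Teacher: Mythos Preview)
Your overall architecture---extend $\varphi$ affinely over faces, show the extension is a local homeomorphism, then use properness and degree one to conclude it is a global homeomorphism---matches the paper's. The lift to $\mathbb R^2$ is not needed: the paper runs the same covering argument directly with $\bar\varphi:|\mathcal T|\to\mathbb T^2$.

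The genuine gap is exactly the step you flag, and it is more serious than a missing citation. The $w$-balanced condition at a \emph{single} vertex $v$ only tells you that the outgoing edge vectors are not contained in a closed half-plane; it does \emph{not} pin the total angle around $\tilde\varphi(v)$ to $2\pi$. A balanced star can still fold, giving angle sum strictly greater than $2\pi$. Likewise your parenthetical claim that ``non-degeneracy of each triangle follows from the same balance argument'' is unjustified: a degenerate triangle is perfectly compatible with balance at each of its three vertices taken individually. Both of these obstructions are global, not local, and the Gortler--Gotsman--Thurston input you allude to is not a ``one-ring analysis'' but a global index-counting theorem.

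The paper resolves these two points with two separate global arguments. For non-degeneracy it projects along a direction $\mathbf n$ to obtain a discrete one-form $\eta$, perturbs by a carefully chosen reference one-form $\xi$ (produced from an auxiliary genuine geodesic triangulation), and applies the discrete Poincar\'e--Hopf index theorem on the torus to derive a contradiction from any degenerate face. For the angle sum it uses that balance gives $\sum_{jk}\theta^i_{jk}\ge 2\pi$ at every vertex, and then invokes Gauss--Bonnet on $\mathbb T^2$, which forces $\sum_i\bigl(2\pi-\sum_{jk}\theta^i_{jk}\bigr)=0$, hence equality at every vertex. Your outline needs both of these global ingredients; the cyclic combinatorial ordering of the link alone cannot substitute for them.
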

To be self-contained, we will give a simple proof for Theorem \ref{embedding1} , which is adapted from Gortler-Gotsman-Thurston's argument in \cite{gortler2006discrete}.

The classical Tutte's embedding theorem \cite{tutte1963draw} states that a straight-line embedding of a simple 3-vertex-connected planar graph can be constructed by fixing an outer face as a convex polygon and solving interior vertices on the condition that each vertex is in the convex hull of its neighbors. Various new proofs of Tutte's embedding theorem have been proposed by Floater \cite{floater2003one}, Gortler-Gotsman-Thurston \cite{gortler2006discrete}, et al. 

Tutte's embedding theorem has been generalized by Colin de Verdi{\`e}re \cite{de1991comment},  Delgado-Friedrichs \cite{delgado2005equilibrium}, and   Hass-Scott \cite{hass2012simplicial} to surfaces with non-positive Gaussian curvatures. They showed that the minimizer of a discrete Dirichlet energy is a geodesic triangulation.
Here the fact that $\varphi$ is a minimizer of a discrete Dirichlet energy means that $\varphi$ is $w$-balanced for some symmetric weight $w$ in $\mathbb{R}^{\vec{E}}_+$ with $w_{ij}=w_{ji}$.  Their result also implies that $X = X(\mathbb{T}^2, \mathcal{T}, \psi)$ is not an empty set for any topological triangulation $(\mathcal{T},\psi)$. 
Recently, Luo-Wu-Zhu \cite{luo2020} proved a new version of Tutte's embedding theorem, for nonsymmetric weights and triangulations of orientable closed surfaces with non-positive Gaussian curvature. 

Gortler-Gotsman-Thurston \cite{gortler2006discrete} generalized Tutte's embedding theorem to flat tori.
In contrast to the case of convex polygons and surfaces of negative curvatures, it is not always possible to construct a geodesic triangulation of $\mathbb{T}^2$ such that it is $w$-balanced with respect to a given non-symmetric weight $w$. See \cite{chambers2020morph} Section 1.1 for a detailed discussion.

\subsection{Outline of the Proof}
Fix a lifting $(\tilde x_i,\tilde y_i)\in\mathbb R^2$ of $\psi(v_i)\in\mathbb T^2$ for each $i=1,...,n$. Then for any $\vec e_{ij}\in\vec E$, there exists a unique lifting $\tilde\psi_{ij}:[0,1]\rightarrow\mathbb R^2$ of $\psi_{ij}=\psi|_{e_{ij}}:[0,1]\rightarrow\mathbb T^2$ such that $\tilde\psi_{ij}(0)=(\tilde x_i,\tilde y_i)$. Then
$$
\tilde\psi_{ij}(1)=(\tilde x_j,\tilde y_j)+(b_{ij}^x,b_{ij}^y)
$$
for some lattice point $(b_{ij}^x,b_{ij}^y)\in\mathbb Z^2$.

A geodesic triangulation $\varphi$ in $X_0$ can be represented by $(x_i,y_i)\in\mathbb R^2$ for $i=1, ... ,n$ with $(x_1,y_1)=(0,0)$.
Under this representation, $\varphi_{ij}:[0,1]\rightarrow\mathbb T^2$ is the quotient of the linear map
$$ 
\varphi_{ij}(t)=t( x_j+b_{ij}^x, y_j+b_{ij}^y)+(1-t)(x_i,y_i),
$$
and the equations for $w$-balanced conditions at all the vertices can be written as 
\begin{align*}
    \sum_{j\in N(i)} w_{ij} (x_j - x_i + b^x_{ij}) =& 0,\\
    \sum_{j\in N(i)} w_{ij} (y_j - y_i + b^y_{ij}) =& 0.
\end{align*}
In a closed matrix form, we can write
\begin{equation}
\label{balance}
A(w) \mathbf{x} = \mathbf{b}(w),
\end{equation}
where the weight matrix $A(w)$ is 

$$	A(w) = 
\begin{pmatrix}
-\sum_{j = 1}^n w_{1j} & w_{12} & w_{13} & \dots &  w_{1n}  \\
w_{21} & -\sum_{j = 1}^n w_{2j}& w_{23} & \dots &  w_{2n}\\
w_{31} & w_{32} & -\sum_{j = 1}^n w_{3j} & \dots &  w_{3n} \\
\vdots & \vdots & \vdots & \ddots & \vdots \\
w_{n1} & w_{n2} & w_{n3} & \dots & -\sum_{j = 1}^n w_{nj}
\end{pmatrix} 
,$$
and 
$$\mathbf{x} =  \begin{pmatrix}
x_1& y_1  \\
x_2& y_2 \\
\vdots & \vdots  \\
x_n & y_n
\end{pmatrix}, \quad  \mathbf{b}(w) = \begin{pmatrix}
-\sum_{j = 1}^n w_{1j}b_{1j}^x & -\sum_{j = 1}^n w_{1j}b_{1j}^y  \\
-\sum_{j = 1}^n w_{2j}b_{2j}^x & -\sum_{j = 1}^n w_{2j}b_{2j}^y  \\
\vdots & \vdots  \\
-\sum_{j = 1}^n w_{nj}b_{nj}^x & -\sum_{j = 1}^n w_{nj}b_{nj}^y 
\end{pmatrix}.
$$
Here we denote $w_{ij}=0$ if $e_{ij}\notin E$.

A weight $w$ in $\mathbb{R}^{\vec{E}}_+$ is called \textit{admissible} if the equation (\ref{balance}) is solvable.
Let $W$ be the space of admissible weights. For any $w\in W$, a solution $\mathbf x$ to the equation (\ref{balance}) uniquely determines the coordinates of the vertices, 
and a $w$-balanced  map $\varphi$ that is homotopic to $\psi|_{\mathcal T^{(1)}}$.
By Theorem \ref{embedding1}, such $\varphi$ is an embedding, and $\varphi\in X$.
Noticing that $A(w)$ is weakly diagonal dominant and the graph $\mathcal T^{(1)}$ is connected, the solution to equation (\ref{balance}) is unique up to a 2-dimensional translation, and is unique if we require $(x_1,y_1)=(0,0)$. 
Define the \textit{Tutte map} as
$$
\Psi: W \to {X}_0
$$
sending an admissible weight $w$ to the unique $w$-balanced geodesic triangulation  in ${X}_0$. The Tutte map is continuous by the continuous dependence of the solutions to the coefficients in a linear system. 

\begin{figure}[h!]
  \label{mvc}
  \includegraphics[width=0.3\linewidth]{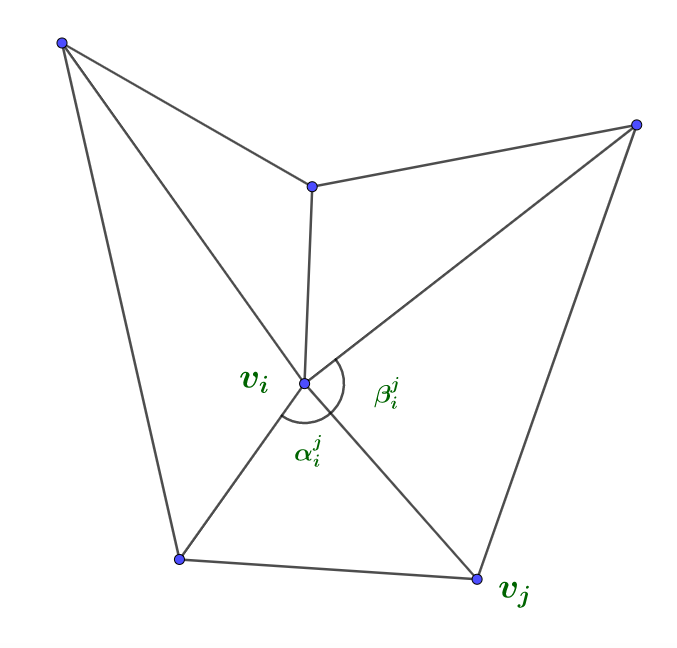}
  \caption{The mean value coordinate.}
\end{figure}

The Tutte map is also surjective, since there exists a smooth map $\sigma$ from ${X}_0$ to $W$ by the ``mean value coordinates"
$$w_{ij}  = \frac{\tan (\alpha_i^j/2) + \tan (\beta_i^j/2)}{l_{ij}}$$
 introduced by Floater \cite{floater2003mean}.
 Here $\alpha_i^j,\beta_i^j$ are two inner angles adjacent to $ e_{ij}$ at the vertex $v_i$, and $l_{ij}$ is the edge length of $\varphi_{ij}$. 
 See Figure \ref{mvc} for an illustration. Floater \cite{floater2003mean} showed that any geodesic triangulation $\varphi$ is $\sigma(\varphi)$-balanced, i.e., $\Phi\circ \sigma=Id_{X_0}$.
 
Having the knowledge of the Tutte map and the mean value coordinates,  Theorem 1.2 reduces to the following proposition.
 \begin{proposition}
\label{retraction}
There exists a continuous map $\Phi: \mathbb{R}^{\vec{E}}_+ \to W$ such that 
$$\Phi|_W = Id_W.$$
\end{proposition}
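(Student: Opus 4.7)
The plan is to characterize $W$ as the zero set of a continuous ``defect map'' $F\colon \mathbb{R}^{\vec E}_+ \to \mathbb{R}^2$, and then build $\Phi$ by modifying $w$ along two distinguished cycles of $\mathcal T^{(1)}$ whose $\mathbb{Z}^2$-homology classes span $\mathbb{R}^2$.

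First I would pin down $F$. Since $A(w)$ is weakly diagonally dominant with zero row sums and $\mathcal T^{(1)}$ is connected, a standard Perron/irreducibility argument yields a unique (up to positive scale) positive left null vector $\pi(w)\in\mathbb R^n_+$ depending continuously (indeed real-analytically) on $w$. Solvability of the balance system $A(w)\mathbf x = \mathbf b(w)$ is equivalent to $\mathbf b(w)$ being orthogonal to $\pi(w)$, i.e., to the vanishing of
\[
F(w) := \Bigl(\sum_{ij\in\vec E}\pi_i(w)\,w_{ij}\,b^x_{ij},\ \sum_{ij\in\vec E}\pi_i(w)\,w_{ij}\,b^y_{ij}\Bigr),
\]
so $W = F^{-1}(0)$. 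Using $b_{ji}=-b_{ij}$, every symmetric weight ($w_{ij}=w_{ji}$, for which $\pi$ is constant) lies in $W$; the symmetric cone is therefore a nonempty ``base locus'' inside $W$.

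Next I would pick two oriented simple cycles $\gamma_1,\gamma_2\subset\mathcal T^{(1)}$ whose homology classes $[\gamma_k]:=\sum_{(i,j)\in\gamma_k}(b^x_{ij},b^y_{ij})\in\mathbb Z^2$ form a basis of $\mathbb R^2$; such cycles exist because $\mathcal T$ triangulates the torus. For $w\in\mathbb R^{\vec E}_+$ and $\mathbf s=(s_1,s_2)\in\mathbb R^2$, set
\[
w^{\mathbf s}_{ij} := w_{ij}\exp\bigl(s_1\chi_{\gamma_1}(i,j)+s_2\chi_{\gamma_2}(i,j)\bigr)\in\mathbb R^{\vec E}_+,
\]
which is a positive weight for every $\mathbf s$ and reduces to $w$ at $\mathbf s=(0,0)$. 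Provided the 2-parameter map $\Psi_w\colon \mathbf s\mapsto F(w^{\mathbf s})$ has a unique zero $\mathbf s^*(w)$ depending continuously on $w$, I define $\Phi(w):=w^{\mathbf s^*(w)}$. The retraction property $\Phi|_W=\mathrm{Id}_W$ then follows immediately, because if $w\in W$ then $\Psi_w(0,0)=F(w)=0$, so by uniqueness $\mathbf s^*(w)=(0,0)$ and $\Phi(w)=w$.

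The main obstacle is establishing existence, uniqueness, and continuous dependence of $\mathbf s^*(w)$. I would analyze the Jacobian $D_{\mathbf s}\Psi_w$: its leading ``scaling'' contribution to $\partial F/\partial s_k$ is the positive-weighted sum $\sum_{(i,j)\in\gamma_k}\pi_i(w)w_{ij}(b^x_{ij},b^y_{ij})$, which points in a direction close to $[\gamma_k]$; the secondary contribution from $\partial\pi/\partial\mathbf s$ can be computed by differentiating $\pi(w)^\top A(w)=0$ and controlled via the smooth dependence of the Perron vector on $w$. Combined with a properness argument---as $|\mathbf s|\to\infty$ the weights along $\gamma_k$ blow up and force $|F(w^{\mathbf s})|\to\infty$ in a controlled direction---this should promote the local diffeomorphism given by the Jacobian into a global homeomorphism $\Psi_w$ onto $\mathbb R^2$, yielding $\mathbf s^*(w)$ continuously in $w$. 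Controlling the interaction between the cycle modification and the Perron vector in this Jacobian computation is the main technical hurdle.
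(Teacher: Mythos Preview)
Your strategy is entirely different from the paper's. The paper never uses the Perron left null vector or a defect map; it defines the energy $\mathcal E(w)=\min_{\mathbf x}\|A(w)\mathbf x-\mathbf b(w)\|^2$, builds an explicit monotone flow on $\mathbb R^{\vec E}_+\setminus W$ (each coordinate satisfying $0\le\dot w_{ij}\le w_{ij}$) along which $\dot{\mathcal E}\le -C(w^0)\sqrt{\mathcal E}$, so the flow reaches $W$ in finite time, and takes $\Phi$ to be the endpoint map. The technical work is an analysis of the least-squares residual $\mathbf r(w)$ yielding this energy estimate. Your two-parameter root-finding scheme is appealing because it would make the retraction explicit, but it is not complete.

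The gap you flag is genuine, and in fact properness fails for the family you wrote down. Your modification multiplies only the directed edges lying on $\gamma_k$, so as $s_k\to -\infty$ the weights $w^{\mathbf s}_{ij}$ with $(i,j)\in\gamma_k$ tend to $0$ while all other weights, including the reverse ones $w_{ji}$, stay fixed; the graph remains strongly connected, $\pi(w^{\mathbf s})$ converges to a finite positive limit, and hence $F(w^{\mathbf s})$ converges as well. Thus $\Psi_w$ is not proper, and you have no argument that its image contains $0$ for every $w$, so existence of $\mathbf s^*(w)$ is unproved. The Jacobian heuristic is also not solid: the individual $b_{ij}$ along $\gamma_k$ depend on the arbitrary choice of lifts (only their \emph{sum} equals $[\gamma_k]$), so the ``leading contribution'' $\sum_{(i,j)\in\gamma_k}\pi_i w_{ij}b_{ij}$ has no reason to point near $[\gamma_k]$, and you give no mechanism making the $\partial\pi/\partial\mathbf s$ term subordinate. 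Without both existence and uniqueness of $\mathbf s^*(w)$ you cannot define $\Phi$, and uniqueness is exactly what forces $\Phi|_W=\mathrm{Id}_W$; as written the construction does not yet yield a map.
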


\begin{proof}[Proof of Theorem \ref{contractible} assuming Proposition \ref{retraction}.]
By Proposition \ref{retraction},  $W$ is contractible since there exists a  retraction from the contractible space $\mathbb{R}^{\vec{E}}_+$ to $W$. So $\sigma\circ\Phi$ is homotopic to the identity map on $W$. On the other hand $\Phi\circ\sigma=Id_{X_0}$, and thus $X_0$ is homotopic to $W$ and contractible.
\end{proof}

\subsection{Organization of this Paper} In Section 2, we will prove Proposition \ref{retraction} by constructing a flow. In Section 3, we prove Theorem \ref{embedding1} following the idea in  Gortler-Gotsman-Thurston's work \cite{gortler2006discrete}. 

\subsection{Acknowledgement} We really appreciate Professor Jeff Erickson for providing valuable insights and references about this problem.

\section{Proof of Proposition \ref{retraction}}
Set an energy function on the weight space $\mathbb R^{\vec E}_+$ as
$$
\mathcal{E}(w) 
= \min_{\mathbf{x}\in \mathbb{R}^{n\times 2}} \big\|A(w)\mathbf{x} - \mathbf{b}(w)\big\|^2
= \min_{\mathbf{x}\in \mathbb{R}^{n\times 2}:x_1=y_1=0 } \big\|A(w)\mathbf{x} - \mathbf{b}(w)\big\|^2,
$$
where the norm is the Frobenius norm of a matrix. 
The second minimization problem above is a least square problem with $2(n-1)$ real variables and a non-degerate coefficient matrix. By the standard formula in linear least squares (LLS) or quadratic programming (QP), the minimizer, denoted as $\mathbf x(w)$, is a smooth function of $w$, and thus $\mathcal E (w)$ is also a smooth function of $w$.
Note that $\mathcal E(w)=0$ if and only if $w$ is admissible, and intuitively $\mathcal E(w)$ measures the deviation of $w$  from being admissible.
The key idea of the proof is to construct a flow on $\mathbb R^{\vec E}_+\backslash W$ to minimize $\mathcal E(w)$, as in the following Lemma \ref{flow1}.
\begin{lemma}
\label{flow1}
There exists a smooth function 
$
\Theta:\mathbb R^{\vec E}_+\backslash W\rightarrow\mathbb R^{\vec E}
$
and a continuous function $C(w)$ on $\mathbb R^{\vec E}_+$
such that for any initial value $w^0\in\mathbb R^{\vec E}_+$, the flow $w(t)$ defined by 
\begin{equation}
\label{ODE}
\Big\{ \begin{array}{ll}
\dot{w}(t) = \Theta(w(t)) \\
 w(0) = w^0
\end{array}
\end{equation}
satisfies that for any $t$ in the maximum existing interval $[0,T)$,
\begin{enumerate}
 \item[(a)] 
 $$
 0 \leq \dot w_{ij}(t) \leq w_{ij}(t),
 $$
 and
 \item[(b)] 
 $$
 \frac{d\mathcal{E}(w(t))}{dt} \leq -C(w^0) \sqrt{\mathcal{E}(w(t))}.
 $$
\end{enumerate}
\end{lemma}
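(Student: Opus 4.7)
The plan is to construct $\Theta$ as a smooth, regularized version of the ``non-negative part of $-\nabla\mathcal E$,'' and to prove the rate inequality by turning a cohomological obstruction on the torus into a Polyak--Lojasiewicz type bound. First, since the least-squares minimizer $\mathbf x(w)$ depends smoothly on $w$, the envelope theorem gives
\begin{equation*}
\partial_{w_{ij}}\mathcal E(w) \;=\; 2\langle R_i(w),\, v_{ij}(w)\rangle,
\end{equation*}
where $R_i(w)\in\mathbb R^2$ is the $i$th row of the residual $r(w)=A(w)\mathbf x(w)-\mathbf b(w)$ and $v_{ij}(w)=(x_j-x_i+b_{ij}^x,\,y_j-y_i+b_{ij}^y)$ is the lifted edge vector. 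The normal equations $A(w)^T r(w)=0$, combined with connectedness of $\mathcal T^{(1)}$ and positivity of $w$, force $r(w)=\pi(w)\,c(w)^T$ with $\pi(w)\in\mathbb R^n_{>0}$ the positive generator of $\ker A(w)^T$ and $c(w)\in\mathbb R^2$. In particular $R_i(w)=\pi_i(w)\,c(w)$---all vertex residuals are parallel---and $w\in W$ iff $c(w)=0$.

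The geometric heart of the proof is the following cohomological lower bound. Fix two 1-cycles $\gamma_1,\gamma_2$ in $\mathcal T^{(1)}$ with $\psi$-homology classes $(1,0),(0,1)\in H_1(\mathbb T^2)$. For any $w$ with $c(w)\ne 0$, the closed 1-form $\omega=c_x\,dx+c_y\,dy$ on $\mathbb T^2$ satisfies $\int_{\varphi(e_{ij})}\omega=\langle c,v_{ij}(w)\rangle$, and because $\varphi\simeq\psi$, telescoping around $\gamma_k$ yields $c_x$ or $c_y$. Since $|c_x|^2+|c_y|^2=|c|^2$, at least one of $|c_x|,|c_y|$ is $\ge |c|/\sqrt 2$, and orienting that cycle appropriately gives
\begin{equation*}
\sum_{e\in\gamma,\ \langle c,v_e\rangle<0}|\langle c,v_e\rangle|\;\ge\;\tfrac{|c|}{\sqrt 2}.
\end{equation*}
Combining this with Cauchy--Schwarz on the $|\gamma|$ summands and the identity $\partial_{w_{ij}}\mathcal E=2\pi_i\langle c,v_{ij}\rangle$ produces a Polyak--Lojasiewicz type inequality $\sum_{g_e<0} g_e^2\ge c_1(w)\,\mathcal E(w)$ with $c_1(w)>0$ continuous on $\mathbb R_+^{\vec E}\setminus W$.

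With this bound in hand, set $\Theta_{ij}(w):=w_{ij}\,\eta(-g_{ij}(w))/M(w)$, where $g_{ij}=\partial_{w_{ij}}\mathcal E$, $\eta:\mathbb R\to[0,\infty)$ is a fixed smooth truncation with $\eta\equiv 0$ on $(-\infty,0]$ and $\eta(t)\sim t$ as $t\to\infty$, and $M(w)>0$ is a smooth normalizer (e.g.\ $M(w)=|c(w)|+\sum_e w_e\,\eta(-g_e(w))$) chosen so that $\Theta_{ij}(w)\le w_{ij}$ everywhere. Condition (a) then holds by construction, and a direct computation of $d\mathcal E/dt$ combined with the PL bound yields $d\mathcal E/dt\le -C(w)\sqrt{\mathcal E(w)}$ for some continuous $C(w)>0$. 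Since $\dot w\ge 0$ and $\dot w\le w$ along the flow, and $\sqrt{\mathcal E}$ decreases linearly in $t$ (so the maximal existence time is finite), the trajectory stays in a compact subset of $\mathbb R_+^{\vec E}$ depending only on $w^0$; taking the infimum of $C(w)$ over this set produces the required $C(w^0)$, continuous in $w^0$.

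The main obstacle is the Polyak--Lojasiewicz bound: converting the topological non-vanishing $[\omega]\ne 0\in H^1(\mathbb T^2)$ into a uniform analytic inequality with an explicit positive coefficient, with careful control of $\pi(w)$, $v_{ij}(w)$, and the truncation $\eta$. Once this is established, the remaining pieces---smoothing $\eta$ and $M$ so that $\Theta$ is genuinely $C^\infty$ on $\mathbb R_+^{\vec E}\setminus W$, compactness of the trajectory, and continuity of $C(w^0)$---are delicate but routine.
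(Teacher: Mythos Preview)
Your conceptual framework matches the paper's: exploit the rank-one structure $r(w)=\pi(w)c(w)^T$ of the residual, use the non-trivial homology of $\mathbb T^2$ to find a directed edge with $\langle c,v_{ij}\rangle$ bounded away from zero (this is exactly the paper's Lemma~2.3), and flow by selectively increasing those $w_{ij}$ for which this projection is negative. But the specific construction you propose has a genuine gap that prevents the argument from closing.

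The core problem is that your flow is driven by the \emph{unnormalized} gradient $g_{ij}=2\pi_i\langle c,v_{ij}\rangle$, which vanishes as $w\to W$. Any smooth $\eta$ with $\eta\equiv 0$ on $(-\infty,0]$ is flat at $0$, so $\eta(-g_{ij})=o(|c|^k)$ for every $k$; with your normalizer $M(w)\sim|c|$ one gets $-d\mathcal E/dt=o(|c|^k)$, whereas $\sqrt{\mathcal E}\sim|c|$. Thus your $C(w)$ tends to $0$ on $W$ and cannot extend to a positive continuous function on all of $\mathbb R_+^{\vec E}$, which is precisely what the lemma requires. This also makes your last paragraph circular: you invoke finite maximal existence time to obtain the compact set over which to take $\inf C$, but finite existence time already needs a uniform positive lower bound on $C(w(t))$. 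The paper avoids this by working with the \emph{normalized} projection $u_{ij}=\mathbf n\cdot v_{ij}$, $\mathbf n=c/|c|$, so that the cutoff argument of the flow is an $O(1)$ quantity and the flow does not degenerate near $W$; the resulting $C(w)$ depends only on $L(w)=\min w_{ij}$ and $M(w)=\max\{2,\max|w_{ij}-w_{ji}|\}$ and is manifestly positive and continuous on all of $\mathbb R_+^{\vec E}$.

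A second missing ingredient is control of the ratios $\pi_i/\pi_j$ along the flow. Your PL constant is bounded below by something like $\pi_{\min}^2/\|\pi\|^2$, which can degenerate if $\pi$ becomes unbalanced. The paper proves (Lemma~2.2) that $\|\mathbf r_i\|/\|\mathbf r_j\|\le(\max_{e}w_{ij}/w_{ji})^{n-1}$ and then inserts an extra cutoff factor $h(w_{ij}-w_{ji})$ into the flow that freezes $w_{ij}$ once $w_{ij}-w_{ji}\ge 2$. This forces $M(w(t))$ to be non-increasing and $L(w(t))$ non-decreasing, so $C(w(t))$ is \emph{monotone} and $C(w(t))\ge C(w^0)$ for all $t$, eliminating the circularity without any compactness argument. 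Your construction has no analogue of $h$, and noting that ``careful control of $\pi(w)$'' is needed does not supply one; this is not a routine smoothing issue but the second structural idea of the proof.
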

Proposition \ref{retraction} is proved in Section 2.1, assuming this Lemma \ref{flow1}. Then we construct a flow in Section 2.2, and  in Section 2.3 show that this flow is satisfactory for Lemma \ref{flow1}.

\subsection{Proof of Proposition \ref{retraction} Assuming Lemma \ref{flow1}}

Assume $\Theta(w)$ and $C=C(w^0)$ are as in Lemma \ref{flow1}. Given $w^0\in\mathbb R_+^{\vec E}\backslash W$, assume $w(t)$ is the flow defined by equation ({\ref{ODE}}), and 
$[0,T)$ is the maximum existing interval. 

We claim that $T=T(w^0)<\infty$ and $w(t)$ converge to some $\bar w$ as $t\rightarrow T$.
Since 
$$
\frac{d\mathcal E}{dt}\leq -C\sqrt{\mathcal E},
$$ 
we have that 
$$
\frac{d(\sqrt{\mathcal E})}{dt}(t)\leq -\frac{C}{2},
$$
and 
$$
\sqrt{\mathcal{E}(w(t))}\leq \sqrt{\mathcal{E}(w(0))} - \frac{ Ct}{2},
$$
which implies 
$$
T\leq \frac{2\sqrt{\mathcal E(w^0)}}{C(w^0)}<\infty.
$$
Since
$$
 0 \leq \dot w_{ij}(t) \leq w_{ij}(t),
 $$
 we have that
\begin{equation}
\label{ineq}
w_{ij}(t)\leq w^0_{ij}e^{t}\leq w^0_{ij}e^{T}.
\end{equation}
Then by the monotone convergence theorem, $w(t)$ converge to some $\bar w$. By the maximality of $T$, $\bar w$ has to be in $W$. Let $\Phi:\mathbb R^{\vec E}_+\rightarrow W$ be such that $\Phi(w^0)=\bar w$ if $w^0\notin W$, and $\Phi(w)=w$ if $w\in W$.

Now we prove that $\Phi$ is continuous, i.e., for any $w\in \mathbb R^{\vec E}_+$ and $\epsilon>0$, there exists $\delta>0$ such that $|\Phi(w')-\Phi(w)|_\infty\leq\epsilon$ for any $w'$ with $|w'-w|_\infty<\delta$. We consider the two cases $w\in W$ and $w\notin W$.

\subsubsection{$w\in W$}
Since $C(w)$ is continous, there exists $C_1>0$ and $\delta_1>0$ such that $C(w')\geq C_1$ for any $w'$ with $|w-w'|_\infty\leq\delta_1$. Since $\mathcal E$ is continuous, there exists $\delta_2\in(0,\delta_1)$ such that 
$$
\mathcal E(w')\leq 
\left[
\frac{C_1}{2}\log(1+\frac{\epsilon}{2|w|_\infty+\epsilon})
\right]^2
$$
if $|w'-w|_\infty<\delta_2$.
Then we will show that $\delta=\min\{\delta_2,\epsilon/2\}$ is satisfactory. Assume $w'$ satisfies that $|w'-w|_\infty<\delta$. If $w'\in W$, then $|\Phi(w')-\Phi(w)|_\infty=|w'-w|_\infty<\delta\leq\epsilon$. If $w'\notin W$,
$$
T(w')\leq\frac{2\sqrt{\mathcal E(w')}}{C(w')}\leq\frac{C_1\log(1+\epsilon/(2|w|_\infty+\epsilon))}{C_1}
=\log(1+\frac{\epsilon}{2|w|_\infty+\epsilon})\leq\log(1+\frac{\epsilon}{2|w'|_\infty}).
$$
So by the inequality (\ref{ineq}),
$$
|\Phi(w')_{ij}-\Phi(w)_{ij}|\leq|\Phi(w')_{ij}-w'_{ij}|+|w'_{ij}-w_{ij}|
<
w'_{ij}(e^{T(w')}-1)+\frac{\epsilon}{2}\leq\epsilon.
$$
\subsubsection{$w\notin W$}
Assume $\bar w=\Phi(w)\in W$. Then by the result of the previous case, there exists $\delta_1>0$ such that $|\Phi(w')-\Phi(w)|_\infty<\epsilon$ for any $w'$ with $|w'-\bar w|_\infty<\delta_1$. Assume $w(t)$ is the flow determined by the equation (\ref{ODE}) with the initial value $w^0=w$. Then there exists some $t_0$, such that $|w(t_0)-\bar w|_\infty<\delta_1/2$. By the continuous dependence of the solutions of ODEs on the initial values, there exists $\delta>0$ such that if $|w'-w|_\infty<\delta$, then $|w'(t_0)-w(t_0)|<\delta_1/2$, where $w'(t)$ is the flow determined by equation (\ref{ODE}) with the initial value $w^0=w'$. So if $|w-w'|_\infty<\delta$, we have $|w'(t_0)-\bar w|<\delta_1$ and
$$
|\Phi(w')-\Phi(w)|_\infty =|\Phi(w'(t_0))-\Phi(w)|_\infty<\epsilon.
$$

\subsection{Construction of the Flow}
Denote $\mathbf x(w)$ as the minimizer of the second minimization problem in the definition of the energy function $\mathcal E(w)$.
Define  the residual $\mathbf{r}(w)$ as 
$$\mathbf{r}(w) = A(w)\mathbf{x}(w) - \mathbf{b}(w),$$ 
where 
$$\mathbf{r}(w) = \begin{pmatrix}
r_1^x (w)& r_1^y(w)  \\
r_2^x (w)& r_2^y(w) \\
\vdots & \vdots  \\
r_n^x (w)& r_n^y(w)
\end{pmatrix} = \begin{pmatrix}
\mathbf{r}^x (w)& \mathbf{r}^y (w) \\
\end{pmatrix} = \begin{pmatrix}
\mathbf{r}_1 (w)  \\
\mathbf{r}_2 (w)\\
\vdots  \\
\mathbf{r}_n (w)
\end{pmatrix} \in \mathbb{R}^{n \times 2}.$$
The vector $\mathbf{r}_i$ is the residual at the vertex $v_i \in V$, and $\mathbf{r}^x$, $\mathbf{r}^y$ are the projections of the total residual in the directions of the $x$-axis and the $y$-axis respectively. Then by the minimality of $\mathbf{x}(w)$, 
$$A^{T}(w)\mathbf{r}^{x}(w) = 0 \quad \textrm{and} \quad A^{T}(w)\mathbf{r}^{y}(w) = 0.$$
Equivalently, 
$$\mathbf{r}^x(w) \perp A(w)(\mathbb{R}^n), \quad \textrm{and} \quad \mathbf{r}^y(w) \perp A(w)(\mathbb{R}^n).$$
Since $rank(A(w)) = n-1$, $\mathbf{r}^x \| \mathbf{r}^y$. So $rank(\mathbf{r}) \leq 1$, and $\mathbf{r}_i \| \mathbf{r}_j$ for any $1\leq i,j\leq n$. Here $u\| v$ means that vectors $u$ and $v$ are parallel, i.e., linearly dependent.

\begin{lemma}
\label{weights}
Assume $\mathbf r\neq0$ and the following properties holds for $\mathbf{r}_1, \mathbf{r}_2, \cdots, \mathbf{r}_n$.
\begin{enumerate}
	\item [(a)] The vectors have the same direction, namely,
	$$\langle \mathbf{r}_i,\mathbf{r}_j\rangle > 0,  \forall 1 \leq i, j \leq n.$$
	\item [(b)] If 
			$$
			C = \max_{\vec{e}_{ij} \in \vec{E}} \frac{w_{ij}}{w_{ji}}, 
			$$
			 then 
			$$ \frac{\|\mathbf{r}_i\|_2}{\|\mathbf{r}_j\|_2} \leq C^{n-1}, \forall 1 \leq i, j \leq n.$$
\end{enumerate}
\end{lemma}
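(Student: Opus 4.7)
Since $\mathbf{r}^x\parallel\mathbf{r}^y$ has already been observed and $\mathbf r\ne 0$, I would first reduce to a scalar problem: write $\mathbf r_i = s_i\,\mathbf u$ for a fixed unit vector $\mathbf u\in\mathbb R^2$ and scalars $s_1,\dots,s_n$ forming a nonzero vector $\mathbf s\in\ker A(w)^T$. Under this reduction, statement (a) becomes the assertion that all $s_i$ share a common sign, and (b) becomes $|s_i|/|s_j|\le C^{n-1}$ for all $i,j$.

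The plan is to identify $\mathbf s$ combinatorially via the directed matrix-tree theorem. Since $A(w)\mathbf 1=0$ and $\mathcal T^{(1)}$ is connected, both $\ker A(w)=\mathbb R\,\mathbf 1$ and $\ker A(w)^T$ are one-dimensional. The adjugate identities $A(w)\operatorname{adj}(A(w))=\operatorname{adj}(A(w))A(w)=0$ force each column of $\operatorname{adj}(A(w))$ to be a multiple of $\mathbf 1$ and each row to lie in $\ker A(w)^T$; hence $\operatorname{adj}(A(w))_{ij}$ depends only on $j$, and the diagonal entries $\operatorname{adj}(A(w))_{jj}$ provide (up to a global scalar) the left null vector $\mathbf s$. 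Viewing $-A(w)$ as the out-Laplacian of $\mathcal T^{(1)}$ with weight $w_{uv}$ on $\vec e_{uv}$, the directed matrix-tree theorem evaluates these cofactors as
$$
s_j \;\propto\; W_j \;:=\; \sum_{T\in\mathcal A_j}\,\prod_{(u\to v)\in T} w_{uv},
$$
where $\mathcal A_j$ is the set of spanning in-arborescences of $\mathcal T^{(1)}$ rooted at $v_j$ (edges pointing toward $v_j$). Connectivity of $\mathcal T^{(1)}$ ensures $\mathcal A_j\ne\emptyset$ and hence $W_j>0$ for every $j$, so all $s_j$ share a common sign, which establishes (a).

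For (b), I would exhibit a rerooting bijection $\phi\colon\mathcal A_i\to\mathcal A_j$. Given $T\in\mathcal A_i$, let $j=v_0\to v_1\to\cdots\to v_k=i$ be the unique directed path in $T$ from $j$ to $i$, necessarily with $k\le n-1$. Reversing each of these path edges yields $\phi(T)$; an out-degree inspection shows that the old root $i$ gains an outgoing edge $v_k\to v_{k-1}$ while the new root $j$ loses its outgoing edge, so $\phi(T)\in\mathcal A_j$. Defining the inverse symmetrically confirms bijectivity. The weight ratio is
$$
\frac{w(\phi(T))}{w(T)} \;=\;\prod_{s=1}^{k}\,\frac{w_{v_s v_{s-1}}}{w_{v_{s-1} v_s}} \;\le\; C^{k}\;\le\; C^{n-1}.
$$
Summing over $T\in\mathcal A_i$ yields $W_j\le C^{n-1} W_i$, and swapping $i,j$ gives the reverse bound, whence $\|\mathbf r_i\|/\|\mathbf r_j\|=|s_i|/|s_j|=W_i/W_j\le C^{n-1}$.

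The main obstacle is bookkeeping rather than conceptual depth: one has to match the orientation convention of the directed matrix-tree theorem (in- versus out-arborescences, with the associated sign $(-1)^{n-1}$ relating $\det A(w)_{jj}$ to $W_j$), and verify carefully that the rerooting operation always produces a valid spanning in-arborescence at the new root. Both are standard combinatorial verifications but easy to get wrong in the non-symmetric weight setting.
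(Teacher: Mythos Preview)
Your argument is correct, and it is genuinely different from the paper's. The paper never invokes the matrix--tree theorem or any explicit formula for the left null vector; instead it proceeds by contradiction twice. After rotating so that $\mathbf r^y=0$, for (a) it observes that a sign change among the $r_i^x$ would produce a nonnegative nonzero vector $\mathbf p\perp\mathbf r^x$, hence $\mathbf p\in A(w)(\mathbb R^n)$, and then a maximum-principle argument on $\mathbf q$ with $A(w)\mathbf q=\mathbf p$ forces $\mathbf p=0$. For (b) it orders the vertices by $r_i^x$, notes that a ratio exceeding $C^{n-1}$ would yield a cut $V_0\subsetneq V$ across which $\min_{V_0} r^x_i/\max_{V\setminus V_0} r^x_i>C$, and derives $0=\langle\mathbf r^x,A(w)\mathbf 1_{V_0}\rangle<0$ from the definition of $C$.

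Your approach trades these ad hoc contradictions for a single structural identification $s_j\propto W_j$, after which both (a) and (b) fall out of positivity of the arborescence sums and the rerooting bijection. This is arguably cleaner and yields more: you actually know what the residual vector \emph{is}, not just that it is sign-consistent and has bounded ratios. The cost is importing the directed matrix--tree theorem and checking the orientation convention, which you correctly flag. The paper's argument, by contrast, is entirely self-contained and needs nothing beyond the rank computation $\operatorname{rank}A(w)=n-1$ already established. One small point worth making explicit in your write-up: the step $C^k\le C^{n-1}$ uses $C\ge 1$, which holds because the maximum defining $C$ is taken over all directed edges and hence sees both $w_{ij}/w_{ji}$ and its reciprocal.
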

\begin{proof}
Without loss of generality, after a rotation we can assume that all the vectors $\mathbf{r}_i$ are parallel to $x$-axis, namely $\mathbf{r}^y = {0}$.

To prove part (a), assume that $\langle \mathbf{r}_i,\mathbf{r}_j\rangle = r_i^x\cdot r_j^x \leq 0$ for some $ 1\leq i, j \leq n$. Then one can find a non-zero vector 
$\mathbf{p} = (p_1, \cdots, p_n)^T\in \mathbb{R}^n_{\geq0}$ 
so that $\mathbf{p} \perp \mathbf{r}^x$. Then  $\mathbf{p} \in A(w)(\mathbb{R}^n)$ and there exists $\mathbf{q} =  (q_1, \cdots, q_n)^T\in \mathbb{R}^n$ with $\mathbf{p} = A(w)\mathbf{q}$. Then if $q_i=\max_{j} q_j$ for some $i$, 
$$
0\leq p_i=\sum_{j=1}^nw_{ij}(q_j-q_i)\leq0,
$$
and thus $q_j=q_i$ if $j\in N(i)$. By this maximum principle and the connectedness of the graph, $q_j=q_i$ for any $j\in V$, and $\mathbf p=A(w)\mathbf q=0$. This contradicts with that $\mathbf p$ is nonzero.

If part (b) is not true, ordering the set $V$ based on the values of $r_i^x$ monotonically, one can find a non-empty proper subset $V_0 \subsetneq V$  such that 
$$\frac{\underset{i \in V_0}{\min}  \{r^x_i\}}{\underset{i \in V - V_0}{\max}\{r^x_i\}} > C.$$
Choose a vector $\mathbf{p} \in \mathbb{R}^n$ such that $p_i = 1$ if $i\in V_0$, and $p_i =0$ otherwise. Then the contradiction follows from
\begin{align*}
0 = & \langle \mathbf{r}^x, A(w)\mathbf{p} \rangle = \sum_{i\in V} r_i^x \sum_{j\in N(i)} w_{ij}(p_j - p_i) = \sum_{\vec{e}_{ij}\in \vec{E}} w_{ij}r^x_ip_j - \sum_{i\in V}r^x_ip_i\sum_{j\in N(i)} w_{ij} \\
 = & \sum_{\underset{j\in V_0}{\vec{e}_{ij}\in \vec{E}}} w_{ij}r^x_i - \sum_{i\in V_0}r^x_i \sum_{j\in N(i)} w_{ij} =\sum_{\underset{j\in N(i)}{i\in V_0, j\in V - V_0}}(r_j^xw_{ji} - r_i^x w_{ij}) <0.
\end{align*}
\end{proof}

Assume $\mathbf n=\mathbf n(w)\in\mathbb R^2$ is the unit vector that is parallel to $\mathbf r_1$ and 
$\langle \mathbf n,\mathbf r_1 \rangle>0$.
Define for each directed edge, 
$$u_{ij}=u_{ij}(w) =\mathbf n\cdot(\mathbf x_j - \mathbf x_i + (b_{ij}^x,b_{ij}^y)),  \quad \forall \vec{e}_{ij}\in \vec{E},$$
where $\mathbf x_i=(x_i,y_i)$ is given by the minimizer $\mathbf{x}(w)$. 
Note that  
$$
\|\mathbf r_i\|_2 = \mathbf n\cdot \mathbf r_i=\sum_{j\in N(i)}w_{ij}u_{ij}.
$$

\begin{lemma}
\label{loop}
There exists a constant $\beta=\beta(T,\psi)>0$ such that for any $w\in\mathbb R^{\vec E}_+$, there exists $\vec e_{ij}\in\vec E$ such that $u_{ij}(w)\leq -\beta$.
\end{lemma}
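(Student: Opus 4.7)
\medskip

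\noindent\textbf{Proof plan.}
My approach exploits a telescoping identity: for any closed walk in $\mathcal{T}^{(1)}$, the sum of the quantities $u_{ij}$ depends only on the winding vector of the walk in $\mathbb{T}^2$, not on the minimizer $\mathbf{x}(w)$ or on $w$ itself. Concretely, if $\gamma = (v_{i_0}, v_{i_1}, \ldots, v_{i_k} = v_{i_0})$ is a closed walk whose consecutive vertices are joined by directed edges $\vec{e}_{i_{l-1} i_l} \in \vec{E}$, then
\begin{equation*}
\sum_{l=1}^{k} u_{i_{l-1} i_l}(w) = \mathbf{n} \cdot \sum_{l=1}^{k} \bigl[(\mathbf{x}_{i_l} - \mathbf{x}_{i_{l-1}}) + (b_{i_{l-1} i_l}^x, b_{i_{l-1} i_l}^y)\bigr] = \mathbf{n}\cdot \mathbf{w}(\gamma),
\end{equation*}
where $\mathbf{w}(\gamma) := \sum_{l=1}^k (b_{i_{l-1} i_l}^x, b_{i_{l-1} i_l}^y) \in \mathbb{Z}^2$ is the winding vector of $\gamma$. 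This vector is exactly the image of $[\gamma]$ under $\psi_*: H_1(|\mathcal{T}|) \to H_1(\mathbb{T}^2) \cong \mathbb{Z}^2$, and in particular is independent of $w$.

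Next, since $\psi$ is a homeomorphism, the induced map $\psi_*$ is an isomorphism on $H_1$. So I can fix, once and for all (depending only on $(\mathcal{T}, \psi)$), two closed walks $\gamma_1, \gamma_2$ in $\mathcal{T}^{(1)}$ whose winding vectors $\mathbf{w}_1, \mathbf{w}_2 \in \mathbb{Z}^2$ form a basis of $\mathbb{R}^2$; explicitly, one can take a spanning tree of $\mathcal{T}^{(1)}$ and select two non-tree edges whose associated fundamental cycles have linearly independent winding vectors. Let $K$ be the maximum of the lengths of $\gamma_1$ and $\gamma_2$. Since $\mathbf{w}_1$ and $\mathbf{w}_2$ are linearly independent, the continuous function $\mathbf{n} \mapsto \max(|\mathbf{n}\cdot \mathbf{w}_1|, |\mathbf{n}\cdot \mathbf{w}_2|)$ attains a positive minimum $c_0 > 0$ on the unit circle by compactness.

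Now, given any $w \in \mathbb{R}^{\vec{E}}_+$, set $\mathbf{n} = \mathbf{n}(w)$. Pick $i \in \{1,2\}$ with $|\mathbf{n}\cdot \mathbf{w}_i| \geq c_0$, and let $\epsilon \in \{\pm 1\}$ be the sign with $\epsilon \mathbf{n}\cdot \mathbf{w}_i \leq -c_0$. Traversing $\gamma_i$ with orientation $\epsilon$ (which is a legal sequence of directed edges, since $\vec{E}$ contains both orientations of each undirected edge), the telescoping identity gives $\sum_{l=1}^{k_i} u_{i_{l-1} i_l}(w) \leq -c_0$, so by the pigeonhole principle some directed edge $\vec{e}_{ij}$ along the walk satisfies $u_{ij}(w) \leq -c_0/K =: -\beta$. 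Since $\mathbf{w}_1, \mathbf{w}_2, c_0, K$ are all determined by $(\mathcal{T},\psi)$ alone, $\beta$ depends only on $(\mathcal{T}, \psi)$, completing the proof.

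\medskip

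\noindent\textbf{Anticipated obstacle.}
The argument is mostly bookkeeping; the one point to verify carefully is the $w$-independence of the constant. This is secured by two compactness/linear-algebra inputs: first, that the winding vectors of $\gamma_1, \gamma_2$ depend only on $(\mathcal{T}, \psi)$ (through the lifting data $b_{ij}^{x,y}$, also fixed once and for all), and second, that the bound $c_0$ on the unit circle is uniform because the two winding vectors form a basis. Neither the solution $\mathbf{x}(w)$ of the least-squares problem nor the unit vector $\mathbf{n}(w)$ enters the constants, only the combinatorial-topological data $(\mathcal{T}, \psi)$.
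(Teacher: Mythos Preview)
Your proof is correct and follows essentially the same approach as the paper: both use the telescoping identity $\sum_l u_{i_{l-1}i_l}=\mathbf n\cdot\mathbf w(\gamma)$ along closed edge-loops together with pigeonhole. The only cosmetic difference is that the paper picks loops with winding vectors $(1,0)$ and $(0,1)$ and uses the explicit bound $\max(|n_1|,|n_2|)\geq 1/\sqrt 2$, whereas you allow any basis $\mathbf w_1,\mathbf w_2$ and invoke compactness of the unit circle for the uniform lower bound.
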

\begin{proof}
Since $u_{ij}=-u_{ji}$ for any $ij\in E$, it suffices to find $\vec e_{ij}$ such that $|u_{ij}(w)|\geq\beta$.
Assume $\mathbf n=(n_1,n_2)\in\mathbb R^2$. Then $|n_1|\geq1/\sqrt2$ or $|n_2|\geq1\sqrt2$.

If $|n_1|\geq1/\sqrt2$, let $\gamma_1=\mathbb T\times\{0\}$ be a horizontal simple loop in $\mathbb T^2$. Then $\psi^{-1}(\gamma_1)$ is a simple loop in the carrier of $\mathcal T$, and it is not difficult to show that there exists a sequence of vertices 
$v(1),...,v(k)=v(0)$ such that $v(i)\sim v(i+1)$ for any $i=0,...,k-1$, and the union $\cup_{i=0}^{k-1}e_{v(i)v(i+1)}$ is a piecewise linear loop in $|\mathcal T|$, which is homotopy equivalent to $\psi^{-1}(\gamma_1)$. By choosing an appropriate orientation, we have that
$$
\sum_{i=0}^{k-1}(\mathbf x_{v(i)+1}-\mathbf x_{v(i)}+(b^x_{v(i+1)v(i)},b^y_{v(i+1)v(i)}))=(1,0).
$$
So
$$
\sum_{i=0}^{k-1}u_{v(i)v(i+1)}=\mathbf n\cdot(1,0)=n_1,
$$
and there exists some $i$ such that $|u_{v(i)v(i+1)}|\geq |n_1|/k\geq1/(\sqrt{2}k)$. Notice that here $k$ is a constant depends only on $\mathcal T$ and $\psi$.

Similarly, if $|n_2|\geq1/\sqrt2$, there exists some $\vec e_{ij}\in \vec E$ 
such that $|u_{ij}|\geq1/(\sqrt{2} k')$ 
for some constant $k'=k'(\mathcal T,\psi)$.
\end{proof}

We define the smooth flow $\Theta$ on the domain $\mathbb R_+^{\vec E}\backslash W$ on each edge as follows
\begin{equation}
\label{flow2}
\Big\{ \begin{array}{ll}
\dot{w}_{ij} = w_{ij}\cdot g\big(\frac{1}{\alpha}(w_{ij} + w_{ji})u_{ij}\big) \cdot h(w_{ij} - w_{ji}), \\
 w_{ij}(0) = w_{ij}^0,
\end{array}
\end{equation}
where $g$ and $h$ are smooth non-increasing functions such that
\begin{enumerate}
	\item [(a)] The function $g \equiv 1$ on $(-\infty, -1)$ and $g \equiv 0$ on $[0, +\infty)$, and
	\item [(b)] The function $h \equiv 1$ on $(-\infty, 1)$, and $h \equiv 0$ on $[2, +\infty)$, and
	\item [(c)] 
	$$
	\alpha=\alpha(w)=\beta\cdot\bigg(2|E|+\sum_{\vec e_{ij}\in \vec E}w_{ij}^{-1}\bigg)^{-1}.
	$$
\end{enumerate}
Roughly speaking, the function $g$ tends be positive if $u_{ij} < 0$, meaning that $w_{ij}u_{ij}$ will decrease so as to reduce the residual $\|\mathbf{r}_i\|_2$. The function $h$ controls the difference between $w_{ij}$ and $w_{ji}$. $\alpha(w)$ is smooth and very small. Specifically we have that
\begin{equation}
\label{alpha}
    \alpha(w)\leq\frac{\beta}{2|E|}\quad\text{ and }\quad\alpha(w)\leq\beta L(w),
\end{equation}
where $L(w)=\min_{\vec e_{ij}\in \vec E}w_{ij}$ is a continuous function on $w\in\mathbb R^{\vec E}_+$. 
Denote 
$$
M(w)=\max\{2,\max_{\vec e_{ij}\in\vec E}|w_{ij}-w_{ji}|\},
$$
which is another continous function on $\mathbb R^{\vec E}_+$.
\begin{lemma}
\label{flow}
Assume the flow $w(t)$ satisfies the equation (\ref{flow2}). Then we have the following.
\begin{enumerate}
	\item [(a)]$0 \leq \dot{w}_{ij} \leq w_{ij}$.
	\item [(b)]$u_{ij} \geq 0$ implies $\dot{w}_{ij} = 0$. Then $\dot{w}_{ij}u_{ij} \leq 0$ for all directed edges.
	\item [(c)]$w_{ij} - w_{ji} \geq 2$ implies $\dot{w}_{ij} = 0$.
	\item [(d)] $L(w(t))$ is non-decreasing and $M(w(t))$ is non-increasing.
	\item [(e)]For any edge $ij$, 
	$$ 
	\frac{w_{ij}(t)}{w_{ji}(t)} \leq 1 + \frac{M}{L}.
	$$

	\item [(f)]The residual vectors $\mathbf{r}_i(t)$ satisfies
	$$ \frac{\max_{i\in V}\|\mathbf r_i(t)\|_2}{\min_{i\in V}\|\mathbf r_i(t)\|_2} \leq (1 + \frac{M}{L})^{n-1}, \quad \forall 1 \leq i, j \leq n.$$
	\item [(g)]The residual vectors $\mathbf{r}_i(t) $ satisfies 
	\begin{equation}
	\label{Er}
	\frac{\sqrt{\mathcal{E}(w(t))}}{\sqrt{n}(1 + M/L)^{n-1}} \leq  \mathbf \|\mathbf r_i(t)\|_2, \quad \forall 1\leq i\leq n.
	\end{equation}
\end{enumerate}
\end{lemma}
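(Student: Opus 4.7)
\textbf{Proof plan for Lemma \ref{flow}.}
My overall strategy is to unpack the assertions in order, since each part is unlocked by the explicit form of the ODE (\ref{flow2}) and the cutoff properties of $g$ and $h$, and the later parts build directly on the earlier ones together with Lemma \ref{weights}. Parts (a)--(c) will be essentially tautological. For (a), $g,h\in[0,1]$ and $w_{ij}>0$ give $0\le\dot w_{ij}\le w_{ij}$ at once. For (b), when $u_{ij}\ge 0$ the argument $\frac{1}{\alpha}(w_{ij}+w_{ji})u_{ij}$ is non-negative (using $\alpha>0$ and positivity of the weights), so $g\equiv 0$ there and $\dot w_{ij}=0$; splitting into the cases $u_{ij}\ge 0$ and $u_{ij}<0$ then yields $\dot w_{ij}u_{ij}\le 0$. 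For (c), $h\equiv 0$ on $[2,\infty)$ immediately forces $\dot w_{ij}=0$ once $w_{ij}-w_{ji}\ge 2$.

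The qualitative heart of the argument is (d). Monotonicity of $L(w(t))$ is free from (a), since every $\dot w_{ij}\ge 0$. For $M(w(t))$, I plan to argue edge-by-edge: on the region $\{w_{ij}-w_{ji}\ge 2\}$ part (c) kills $\dot w_{ij}$ while $\dot w_{ji}\ge 0$ by (a), so $\frac{d}{dt}(w_{ij}-w_{ji})\le 0$; the symmetric statement handles $\{w_{ji}-w_{ij}\ge 2\}$. Thus $|w_{ij}(t)-w_{ji}(t)|$ can never re-enter the region $[2,\infty)$ from below (the smooth cutoff $h$ tapers $\dot w_{ij}$ to $0$ as the difference approaches $2$) and is non-increasing whenever it is $\ge 2$. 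This gives the per-edge bound $|w_{ij}(t)-w_{ji}(t)|\le\max\{|w_{ij}(s)-w_{ji}(s)|,2\}$ for $t\ge s$, and taking the max over edges (together with the floor at $2$ in the definition of $M$) yields $M(w(t))\le M(w(s))$.

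Parts (e)--(g) should then be short. For (e), when $w_{ij}\ge w_{ji}$ I have $w_{ij}-w_{ji}\le M$ and $w_{ji}\ge L$, so $w_{ij}/w_{ji}=1+(w_{ij}-w_{ji})/w_{ji}\le 1+M/L$; the opposite case is trivial since then the ratio is $\le 1$. For (f), on the domain $\mathbb R_+^{\vec E}\setminus W$ we have $\mathcal E(w)>0$ hence $\mathbf r\ne 0$, so Lemma \ref{weights}(b) applies with $C\le 1+M/L$ by (e), yielding the desired ratio bound on the $\|\mathbf r_i\|_2$. For (g), I combine $\mathcal E(w)=\sum_{i=1}^n\|\mathbf r_i\|_2^2\le n(\max_i\|\mathbf r_i\|_2)^2$ with (f), which bounds $\max_i\|\mathbf r_i\|_2$ by $(1+M/L)^{n-1}\|\mathbf r_j\|_2$ for every $j$, to read off (\ref{Er}).

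The main obstacle I expect is the $M$-monotonicity in part (d). One must carefully combine three regimes on each edge---differences below $2$ (which may grow but are tapered off by $h$ as they approach $2$), differences above $2$ (which strictly decrease because the cutoff shuts off $\dot w_{ij}$ while $\dot w_{ji}\ge 0$), and the symmetric negative-difference regime---and then package these per-edge statements into monotonicity of the global maximum, for which the $\max\{2,\cdot\}$ in the definition of $M$ is essential (it absorbs edges that merely slosh in $[-2,2]$). Once (d) is in hand, parts (e)--(g) are immediate consequences of Lemma \ref{weights} and elementary inequalities.
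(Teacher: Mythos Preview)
Your proposal is correct and follows essentially the same route as the paper: parts (a)--(d) are read off from the cutoff properties of $g$ and $h$ (the paper simply calls these ``straightforward'', so your explicit per-edge barrier argument for the monotonicity of $M$ in (d) is just a fleshed-out version of what the authors leave implicit), and parts (e)--(g) match the paper's short derivations from Lemma~\ref{weights} and $\mathcal E=\sum_i\|\mathbf r_i\|_2^2$ almost verbatim.
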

\begin{proof}
	Part (a) - (d) are straightforward from equation (\ref{flow2}) and the defining properties of smooth functions $g$ and $h$. 
	Part (e) follows from
	$$
	\frac{w_{ij}(t)}{w_{ji}(t)}
	= 
	 1+ \frac{w_{ij}(t)-w_{ji}(t)}{w_{ji}(t)}  
	\leq
	1+\frac{M}{L}.
	$$
	Part (f) follows from Part (e) and Lemma \ref{weights}. 
	For Part (g), by definition $$\mathcal{E}(t) = \sum_{j=1}^n\|\mathbf{r}_j(t)\|_2^2.$$
	Part (f) implies that 
	$$\mathcal{E}(t) \leq n(1 + \frac{M}{L})^{2n-2}\|\mathbf r_i(t)\|_2^2 
	\quad \text{and} \quad \frac{\sqrt{\mathcal{E}(t)}}{\sqrt{n}(1 + M/L)^{n-1}} \leq \|\mathbf r_i(t)\|_2, \quad\forall 1\leq i\leq n.
	$$
\end{proof}

\subsection{Proof of Lemma \ref{flow1}}
\begin{proof}
Let
$$
C(w)=
\frac{\beta L/M}{2\sqrt{n}(1+M/L)^{n-1}}
$$
where $L=L(w)$ and $M=M(w)$ are continous functions on $\mathbb R^{\vec E}_+$, on which $C(w)$ is also continuous.
We claim that such function $C(w)$ and the flow $\Theta$ defined as equation (\ref{flow2}) are satisfactory. Assume $w^0\in\mathbb R^{\vec E}_+\backslash W$ and $w(t)$ is a flow defined by equation (\ref{flow2}). 
By part (a) of Lemma \ref{flow} we only need to prove the Part (b) of Lemma \ref{flow1}.
By part (d) of Lemma \ref{flow}, it is easy to see that $C(w(t))$ is non-decreasing on $t$. So
we only need to prove that
$$
\frac{d\mathcal E(w(t))}{dt}\leq -C(w(t))\sqrt{\mathcal E(w(t))}.
$$

Given $w\in R^{\vec E}_+$ and $\mathbf x\in\mathbb R^{n\times2}$, denote 
$$
\tilde{\mathcal E}(w,\mathbf x)=\|A(w)\mathbf x-\mathbf b(w)\|^2,
$$
and then
\begin{align*}
	\frac{d{\mathcal{E}}(w(\cdot))}{dt}(t) =& \lim_{\epsilon\to 0}\frac{{\mathcal{E}}(w(t+\epsilon)) - {\mathcal{E}}(w(t))}{\epsilon} \\
	\leq& \lim_{\epsilon\to 0}\frac{\tilde{\mathcal{E}}(w(t+\epsilon), \mathbf{x}(w(t))) - \tilde{\mathcal{E}}(w(t), \mathbf{x}(w(t)))}{\epsilon}  \\
	= &\frac{\partial\tilde{\mathcal{E}}}{\partial w}(w(t), \mathbf{x}(w(t))) \cdot \dot{w}.
\end{align*}
So it suffices to show 
$$\frac{\partial\tilde{\mathcal E}}{\partial w}(w(t), \mathbf{x}(w(t))) \cdot \dot{w} \leq - C\sqrt{\mathcal E(w(t))}.$$
Notice that 
\begin{align*}
		&\frac{\partial\tilde{\mathcal{E}}}{\partial w_{ij}}(w(t),\mathbf x(w(t)))\\
		=&\left(\frac{\partial }{\partial w_{ij} }\sum_{i=1}^n
		\big\|\sum_{j\in N(i)}w_{ij}(\mathbf x_j-\mathbf x_i+(b_{ij}^x,b_{ij}^y))\big\|_2^2\right)\bigg|_{(w,\mathbf x(w))}
		\\ 
		=&  2\mathbf r_i\cdot(\mathbf x_j-\mathbf x_i+(b_{ij}^x,b_{ij}^y))\\
		=& 2\|\mathbf r_i\|_2\mathbf n\cdot(\mathbf x_j-\mathbf x_i+(b_{ij}^x,b_{ij}^y))\\ 
		=&2\|\mathbf r_i\|_2\cdot u_{ij}
\end{align*} 
and then, 
\begin{equation}
\label{1}
\frac{\partial\tilde{\mathcal{E}}}{\partial w}(w(t), \mathbf{x}(w(t))) \cdot \dot{w}  
 = 
 2\sum_{\vec e_{ij}\in\vec E}\|\mathbf r_i\|_2u_{ij} \dot w_{ij}
 \leq
 \frac{2\sqrt{\mathcal{E}(w(t))}}{\sqrt{n}(1 + M/L)^{n-1}}
 \sum_{\vec e_{ij}\in \vec E}u_{ij}\dot w_{ij}.
\end{equation}
Here we use the fact that $\dot{w}_{ij}u_{ij} \leq 0$ for all directed edges (part (b) of Lemma \ref{flow}), and the inequality (\ref{Er}).
It remains to show that
$$
\sum_{\vec e_{ij}\in \vec E}u_{ij}\dot w_{ij}\leq-\frac{\beta L}{2M}.
$$
By Lemma \ref{loop} there exists a directed edge $\vec{e}_{i'j'}$ with $u_{i'j'} \leq -\beta$. 
Then we will consider the following two cases:

\subsubsection{Case 1: $w_{i'j'} - w_{j'i'} \leq 1$}
By the definition of the function $h$, 
$$
h(w_{i'j'}-w_{j'i'})=1.
$$
We also have
$$
g\left(\frac{1}{\alpha}(w_{i'j'}+w_{j'i'})u_{i'j'}\right)=1,
$$
since
$$
(w_{i'j'} + w_{j'i'})u_{i'j'}\leq-2\beta L \leq -\alpha.
$$
By equation ({\ref{flow2}}),
$\dot{w}_{i'j'} = w_{i'j'}$.
Notice that $\dot w_{ij}u_{ij}\leq0$ for any $\vec e_{ij}\in \vec E$ by Part (b) of Lemma \ref{flow}, so
$$
\sum_{\vec e_{ij}\in \vec E}u_{ij}\dot w_{ij}\leq u_{i'j'}\dot w_{i'j'}=u_{i'j'}w_{i'j'}\leq -\beta L\leq -\frac{\beta L}{2M}.
$$
\subsubsection{Case 2: $w_{i'j'} - w_{j'i'} \geq 1$}
Denote
$$
\vec E_0 = \Big\{\vec {e}_{ij} \in \vec{E} \big| u_{ij}<0,(w_{ij} - w_{ji})u_{ij} \geq \alpha \Big\}.
$$ 
If $\vec{e}_{ij}\in\vec E_{0}$, 
then obviously $w_{ij}-w_{ji}<0$ and
$$
h(w_{ij}-w_{ji})=1.
$$
Also,
$$
g\left(\frac{1}{\alpha}(w_{ij}+w_{ji})u_{ij}\right)=1
$$
since
$$
(w_{ij} + w_{ji})u_{ij} \leq (w_{ji} - w_{ij})u_{ij} \leq -\alpha.
$$ 
By equation (\ref{flow2}), $\dot{w}_{ij} = w_{ij}$ and

\begin{align}
\label{2}
	\sum_{\vec e_{ij}\in \vec E}\dot w_{ij}u_{ij} \leq \sum_{\vec {e}_{ij} \in \vec E_0} \dot{w}_{ij} u_{ij}  = \sum_{\vec {e}_{ij} \in \vec E_0} {w}_{ij} u_{ij} \underset{*}{\leq} -\frac{L}{M}\sum_{{e}_{ij} \in E_0} ({w}_{ij} - {w}_{ji})u_{ij}.
\end{align}
The last step (inequality $(*)$) uses the fact that
$
w_{ij}\geq-L(w_{ij}-w_{ji})/M
$,
which is equivalent to that $w_{ji}/w_{ij}\leq1+M/L$.

By the fact that $u_{i'j'}\leq-\beta$, and the assumption $w_{i'j'}-w_{j'i'}\geq1$, 
$$
(w_{i'j'}-w_{j'i'})u_{i'j'}\leq-\beta<0<\alpha,
$$
and thus $\vec e_{i'j'}\notin \vec E_0$. Notice that 
$$
\sum_{\vec e_{ij}\in\vec E}w_{ij}u_{ij}
=\sum_{i=1}^n\sum_{j\in N(i)}w_{ij}u_{ij}=\sum_{i=1}^n\|\mathbf r_i\|_2\geq0,
$$
and
\begin{align*}
	&\sum_{\vec e_{ij}\in\vec E}w_{ij}u_{ij}
	=\sum_{\vec e_{ij}\in\vec E:u_{ij}<0}(w_{ij}-w_{ji})u_{ij}
	\\
	 = & \sum_{\vec{e}_{ij} \in {\vec E_0}} ({w}_{ij} - {w}_{ji})u_{ij} +\sum_{{e}_{ij} \in {\vec E} - \vec E_0 -\{{e}_{i'j'}\}:u_{ij}<0} ({w}_{ij} - {w}_{ji})u_{ij} 
	 + (w_{i'j'} - w_{j'i'})u_{i'j'} \\
	 \leq & \sum_{\vec{e}_{ij} \in {\vec E_0}} ({w}_{ij} - {w}_{ji})u_{ij} 
	 + |E|\alpha  - \beta.
\end{align*}
Then
$$
\sum_{\vec{e}_{ij} \in {\vec E_0}} ({w}_{ij} - {w}_{ji})u_{ij}\geq\beta-|E|\alpha\geq\beta/2,
$$
and
$$
\sum_{\vec e_{ij}\in \vec E}u_{ij}\dot w_{ij}\leq-\frac{\beta L}{2M}.
$$
\end{proof}

\section{Proof of Theorem 1.3}
We will first introduce the concept of discrete one forms, and the Index Theorem proposed by Gortler-Gotsman-Thurston \cite{gortler2006discrete}.

\subsection{Discrete One Forms and the Index Theorem}
A \textit{discrete one form} is a real-valued function $\eta$ on the set of directed edges such that it is anti-symmetric on each undirected edge. Specifically, let $\eta_{ij} = \eta(\vec{e}_{ij})$ be the value of $\eta$ on the directed edge from $v_i$ to $v_j$, then we have
$ \eta_{ij} = - \eta_{ji}.$

For a discrete one form, an edge is degenerate (\textit{resp.} non-vanishing) if the one form is zero (\textit{resp.} non-zero) on it. A vertex is degenerate (\textit{resp.} non-vanishing) if all of edges connected to it are degenerate (\textit{resp.} non-vanishing). A face is degenerate (\textit{resp.} non-vanishing) if all of its three edges are degenerate (\textit{resp.} non-vanishing).  A one-form is degenerate (\textit{resp.} non-vanishing) if all the edges are degenerate (\textit{resp.} non-vanishing). Each edge is either degenerate or non-vanishing. However, vertices or faces can be degenerate, non-degenerate but vanishing on some edges,  or non-vanishing. 

\begin{figure}[h!]
  \includegraphics[width=0.9\linewidth]{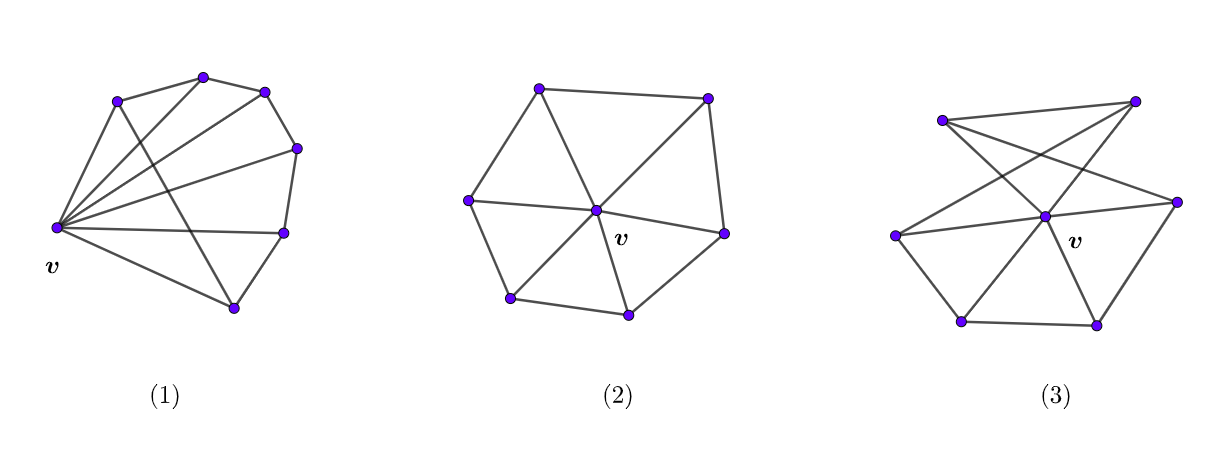}
  \caption{Typical vertex with (1) positive, (2) zero, and (3) negative index.}
  \label{indexcases}
\end{figure}

Assumet $\eta$ is a discrete one form. Deonote $sc(\eta, v)$ as the number of \textit{sign changes} of the non-zero values of $\eta$ on the directed edges starting from $v$, counted in counter-clockwise order. For a vertex $v \in V$, define the \textit{index} of $v$ as  $Ind(\eta, v) = (2-sc(\eta, v))/2$.   Similarly, for a non-degenerate face $t\in F$, the index of $t$ is  $Ind(\eta, t) = (2-sc(\eta,t))/2$, where $sc(\eta, t)$ is the number of sign changes of the non-zero values of $\eta$ on the three edges of $t$, counted in counter-clockwise order. 

The following theorem is a special case of the Index Theorem from \cite{gortler2006discrete}, which is a discrete version of the Poincare-Hopf theorem for discrete one forms. 
\begin{theorem}
\label{index}
Let $\eta$ be a non-vanishing discrete one form on a triangulation of a torus. Then 
$$\sum_{v_i\in V}Ind(\eta, v_i) + \sum_{t_{ijk} \in F} Ind(\eta, t_{ijk}) = 0.$$
\end{theorem}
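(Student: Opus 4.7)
The plan is to rewrite each index in the form $(2-sc)/2$ and reduce the theorem to a combinatorial counting identity for sign changes at the corners of the triangulation, which then closes via the Euler characteristic of the torus. Writing $Ind(\eta,v)=1-sc(\eta,v)/2$ and similarly for faces, the theorem is equivalent to
$$
\sum_{v\in V}sc(\eta,v)+\sum_{t\in F}sc(\eta,t)=2(V+F).
$$
The non-vanishing hypothesis ensures every directed edge has a definite sign, so no case analysis for vanishing edges is needed.

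The crux of the proof will be a local observation at each \emph{corner} $(v,t)$ of the triangulation. Orient the face $t=(v,w,u)$ counterclockwise, so its boundary cycle is $\vec e_{vw},\vec e_{wu},\vec e_{uv}$; compatibility of surface and face orientations forces $\vec e_{vw}$ and $\vec e_{vu}$ to be consecutive in the counterclockwise order around $v$. The corner contributes to $sc(\eta,v)$ iff $\eta_{vw}\eta_{vu}<0$, and contributes to $sc(\eta,t)$ iff $\eta_{uv}\eta_{vw}<0$. The antisymmetry $\eta_{uv}=-\eta_{vu}$ turns the second condition into $\eta_{vw}\eta_{vu}>0$, exactly the negation of the first; since $\eta$ is non-vanishing, \emph{exactly one} of the two sign changes occurs at every corner, and summing gives
$$
\sum_{v\in V}sc(\eta,v)+\sum_{t\in F}sc(\eta,t)=\#\{\text{corners}\}=3|F|.
$$

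To finish, I would invoke the Euler characteristic: on a torus $V-E+F=0$, combined with the triangulation identity $2E=3F$ (each edge bounds two faces, each face has three edges), forces $V=F/2$, hence $2(V+F)=3F$, matching the corner count. Substituting back into the first display yields the theorem. The main obstacle I anticipate is purely orientation bookkeeping in the local step, namely verifying that the counterclockwise convention at a vertex and the counterclockwise convention around a face fit together so that the two sign-change conditions come out complementary rather than equivalent; once the conventions are pinned down, the remainder is a single application of $\chi=0$.
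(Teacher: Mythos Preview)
Your argument is correct. The corner observation is sound: with the face boundary read as $\vec e_{vw},\vec e_{wu},\vec e_{uv}$, the face transition attached to the corner at $v$ compares $\eta_{uv}$ with $\eta_{vw}$, while the vertex transition compares $\eta_{vw}$ with $\eta_{vu}$, and antisymmetry makes these complementary whenever $\eta$ is non-vanishing. The arithmetic $2(|V|+|F|)=3|F|$ follows from $\chi(\mathbb T^2)=0$ and $2|E|=3|F|$ exactly as you wrote.

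As for comparison with the paper: there is nothing to compare against. The paper does not give its own proof of this statement; it records the theorem as a special case of the Index Theorem of Gortler--Gotsman--Thurston and simply cites that reference. Your corner-counting argument therefore supplies a self-contained proof where the paper relies on an external citation, and the same computation works verbatim on any closed oriented surface, yielding $\sum_v Ind(\eta,v)+\sum_t Ind(\eta,t)=\chi(S)$, which is precisely the general statement the paper is quoting. The orientation bookkeeping you flagged is the only place to be careful, and you have it right; no further idea is needed.
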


Assume $\varphi$ satisfies the assumption in Theorem \ref{embedding1}, then for any unit vector $\mathbf n\in\mathbb R^2$
we can naturally construct a discrete one form $\eta$, by letting $\eta_{ij}=\dot\varphi_{ij}\cdot\mathbf n$.
If $\varphi\in X$, a generic unit vector determines a non-vanishing discrete one form $\eta$. Further, if $\varphi\in X$ and
such constructed $\eta$ is non-vanishing, it is not difficult to show that all the indices of the vertices and faces are zero.
Figure 2 illustrates how the neighborhood of $v$ looks like if it has positive, or zero, or negative index, for the case $\mathbf n=(1,0)$.

Based on this construction, we have

\begin{lemma}
\label{perturbation}
Given a triangulation $(\mathcal{T},\psi)$ of $\mathbb{T}^2$, denote $t_{ijk}\in F$ as the triangle with three vertices $v_i$, $v_j$, and $v_k$. There exists a non-vanishing discrete one form $\eta$ such that $\eta_{ij}>0$ and $\eta_{jk}>0$. Moreover, all the indices of the vertices and faces of $\eta$ are zero. 
\end{lemma}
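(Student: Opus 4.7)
The plan is to construct $\eta$ directly from a geodesic triangulation, using the recipe already highlighted in the text: given $\varphi \in X$ and a unit vector $\mathbf{n}\in\mathbb{R}^2$, set $\eta_{ij}=\dot\varphi_{ij}\cdot\mathbf{n}$. The antisymmetry $\dot\varphi_{ij}=-\dot\varphi_{ji}$ makes $\eta$ a discrete one-form. The existence results of Colin de Verdi\`ere, Hass--Scott and Delgado-Friedrichs (cited in the introduction) ensure $X(\mathbb{T}^2,\mathcal{T},\psi)\neq\emptyset$, so we may fix any $\varphi\in X$. The problem then becomes purely one of choosing $\mathbf{n}$ so that (i) $\mathbf{n}\cdot\dot\varphi_{ij}>0$ and $\mathbf{n}\cdot\dot\varphi_{jk}>0$, and (ii) $\mathbf{n}\cdot\dot\varphi_{pq}\neq0$ for every directed edge.

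For (i), the set of admissible $\mathbf{n}$ is the intersection $H_1\cap H_2$ of two open half-planes in $\mathbb{R}^2$. This intersection is non-empty precisely when $\dot\varphi_{ij}$ and $\dot\varphi_{jk}$ are not antiparallel. Lifting the face $t_{ijk}$ to the universal cover $\mathbb{R}^2$, the embedding hypothesis says the three vertex lifts $\tilde v_i,\tilde v_j,\tilde v_k$ are non-collinear; the vectors $\dot\varphi_{ij}$ and $\dot\varphi_{jk}$ are positive multiples of $\tilde v_j-\tilde v_i$ and $\tilde v_k-\tilde v_j$ respectively, and antiparallelism of these two vectors would force the three lifts to be collinear, a contradiction. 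Hence $H_1\cap H_2$ is a non-empty open sector. For (ii), the finitely many edge directions of $\varphi$ determine only finitely many forbidden perpendicular directions on the unit circle; deleting them from $H_1\cap H_2$ still leaves a non-empty open set, and any $\mathbf{n}$ in it gives a non-vanishing $\eta$ with the required positivity at $\vec e_{ij}$ and $\vec e_{jk}$.

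It remains to verify that every vertex and face index of this $\eta$ vanishes, which is the statement the text asserts without proof. For a vertex $v_p$, the edges incident to $\varphi(v_p)$ have distinct cyclic tangent directions summing in angle to $2\pi$, and the sign of $\eta$ on each outgoing edge is determined by which side of the line $\mathbf{n}^\perp$ through $\varphi(v_p)$ the direction lies on; going once around counter-clockwise we cross this line exactly twice, so $sc(\eta,v_p)=2$ and $Ind(\eta,v_p)=0$. For a face $t_{pqr}$, the three directed edge-tangent vectors around its boundary sum to zero in the universal cover, so they cannot all have the same sign of $\mathbf{n}$-inner product; thus the cyclic sign sequence $(+,+,-)$ or $(+,-,-)$ arises (up to cyclic permutation), each giving $sc=2$ and index $0$. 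The main obstacle in this proof is really only the geometric non-antiparallel observation in step (i); everything else is a genericity argument combined with the already established existence of a geodesic triangulation and the sign-change bookkeeping for embeddings.
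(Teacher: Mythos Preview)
Your proof is correct and follows essentially the same approach as the paper: choose $\varphi\in X$ (non-emptiness via Colin de Verdi\`ere / Hass--Scott), set $\eta_{ij}=\dot\varphi_{ij}\cdot\mathbf n$, pick $\mathbf n$ in the open sector determined by $\dot\varphi_{ij}$ and $\dot\varphi_{jk}$, and perturb to avoid the finitely many perpendicular directions. The paper leaves both the non-antiparallel observation and the verification that all indices vanish as ``not difficult''; you have simply written these out explicitly, and your arguments (the non-degeneracy of the lifted triangle for step~(i), the two crossings of $\mathbf n^\perp$ around each embedded vertex star, and the zero-sum of boundary edge vectors for each face) are the intended ones.
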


\begin{proof}
By the result of Colin de Verdi{\`e}re \cite{de1991comment} and Hass-Scott \cite{hass2012simplicial}, the space $X(\mathcal T,\psi)$ is not empty for any $(\mathcal T,\psi)$. Let $\varphi$ be a geodesic triangulation in $X$. Then it is not difficult to find a unit vector $\mathbf n$ such that $\dot\varphi_{ij}\cdot\mathbf n>0$ and $\dot\varphi_{jk}\cdot\mathbf n>0$. Define the discrete one form $\eta$ as $\eta_{ij}=\dot\varphi_{ij}\cdot\mathbf n$.
We can perturb the unit vector $\mathbf n$ a little bit to make $\eta$ non-vanishing, and then such $\eta$ is satisfactory.
\end{proof}

\subsection{The Proof of Theorem \ref{embedding1}}
Assume $\varphi: \mathcal{T}^{(1)} \to \mathbb{T}^2$ satisfies the assumption of Theorem \ref{embedding1}, then there exists a unique extension $\bar{\varphi}: |\mathcal{T}| \to \mathbb{T}^2$ such that the restriction of $\bar{\varphi}$ to every face is linear. Such $\bar\varphi$ is homotopic to $\psi$, and $\varphi$ is a geodesic triangulation in $X$ if and only if $\bar{\varphi}$ is a homeomorphism. 

For any triangle $t_{ijk}\in X$, we say that $\bar\varphi(t_{ijk})$ is \textit{degenerate} if $\bar\varphi(t_{ijk})$ is contained in some geodesic $\lambda$. If $\bar\varphi(t_{ijk})$ is not degenerate, we can naturally define its inner angle $\theta^i_{jk}$ at  $\varphi(v_i)$. 
We claim that 

\begin{enumerate}
	\item [(a)] $\bar\varphi(t_{ijk})$ is not degenerate for any $t_{ijk}\in F$, and
	\item [(b)] $\bar\varphi$ is locally a homeomorphism.
\end{enumerate}
Then $\bar\varphi$ is a proper local homeomorphism, and thus is a covering map. Since $\bar\varphi$ is homotopic to the homeomorphism $\psi$, $\bar\varphi$ is indeed a degree-one covering map, i.e., a homeomorphism.

\subsubsection{Proof of Claim (a)}

Assume there is some triangle $t \in F$ such that $\bar{\varphi}(t)$ is degenerate and hence contained in a geodesic $\lambda$. Here $\lambda$ is assumed to be a closed geodesic, or a densely immersed complete geodesic.
Let $\mathcal{C}$ be the union of all triangles $t$ such that $\bar\varphi(t)\subset\lambda$.
Then $\mathcal{C}$ is not the whole complex $\mathcal{T}$, otherwise $\bar{\varphi}$ is not homotopic to the homeomorphism $\psi$. 
So, we can find a vertex $v_0 \in \partial \mathcal{C}$. Denote $star(v_0)$ as the star-neighborhood of $v_0$ in $\mathcal T$. Then $\bar\varphi(star(v_0))$ is not in $\lambda$, but $\bar\varphi(t_0)\subset\lambda$ for some triangle $t_0$ in $star(v_0)$.

Let $\mathbf n$ be a unit vector that is orthogonal to the geodesic $\lambda$, and define $\eta$ as $\eta_{ij}=\dot\varphi_{ij}\cdot\mathbf n$.
Then the vertex $v_0$ is non-degenerate with respect to $\eta$, but the face $t_0$ is degenerate. Let $\xi$ be a discrete one form in Lemma \ref{perturbation} with the triangle $t_0$ and $v_j = v_0$. 
Scale $\xi$ to make it very small so that $\eta + \xi$ has the same signs with $\eta$ on the non-degenerate edges of $\eta$.

Notice that $sc(\eta, v_0)\neq 0$, or equivalently $sc(\eta,v_0)\geq2$, otherwise all the edges connecting $v_0$ lie on a half-space, which contradicts to the assumption that $\varphi$ is balanced. 
Since $t_0$ is degenerate in $\eta$, taking the opposite $-\xi$ instead of $\xi$ if necessary, we can assume that $sc(\eta, v_0) < sc(\eta + \xi, v_0)$, and thus $Ind(\eta+\xi,v_0)<0$. 

Noticing that $\eta+\xi$ is non-vanishing, we will derive a contradiction with the Index Theorem \ref{index}, by showing that the index of $\eta+\xi$ is nonpositive for any vertex and face.

If a face $t$ is degenerate in $\eta$, then $Ind(\eta +  \xi, t)  = Ind(\xi, t) = 0$. If a face $t$ is non-degenerate in $\eta$, then $Ind(\eta + \xi, t)=Ind(\eta)=0$. In fact, the index of any non-degenerate face is zero. 

If a vertex $v$ is degenerate in $\eta$, then $Ind(\eta +  \xi, v)  = Ind(\xi, v) = 0$. If a vertex $v$ is non-vanishing, then $Ind(\eta +  \xi, v)  = Ind(\eta, v)$. Since $\varphi$ is balanced, $Ind(\eta, v) \leq 0$. 

If a vertex $v$ is non-degenerate but vanishing at some edges in $\eta$, then 
$$Ind(\eta +  \xi, v)  \leq Ind(\eta, v) \leq 0,$$ 
since adding $\xi$ can only introduce more sign changes. 

\subsubsection{Proof of Claim (b)}

Since $\varphi$ is $w$-balanced, it is not difficult to show that for any vertex $i$, 
$$
\sum_{jk:t_{ijk}\in F}\theta_{jk}^i\geq2\pi,
$$
and the equality holds if and only if all the edges around $v_i$ do not ``fold'' under the map $\bar\varphi$.
By the Gauss-Bonnet theorem, 
$$
\sum_{i=1}^n(2\pi-\sum_{jk:t_{ijk}\in F}\theta^i_{jk})=0.
$$
So 
$$
\sum_{jk:t_{ijk}\in F}\theta_{jk}^i=2\pi,
$$
for any vertex $v_i$, and all the edges in $E$ do not fold. Thus, $\bar\varphi$ is a local homeomorphism.

\bibliography{ref} 
\bibliographystyle{amsplain}

\end{document}